\tikzset{ font={\fontsize{9pt}{12}\selectfont}}
\newtheorem{theorem}{Theorem}
\newtheorem{thm}[theorem]{Theorem}
\newtheorem{prop}[theorem]{Proposition}
\newtheorem{lem}[theorem]{Lemma}
\theoremstyle{definition}
\newcommand{\com}{\mathbb{C}}
\newcommand{\wcom}{\widehat{\mathbb{C}}}
\newcommand{\real}{\mathbb{R}}
\newcommand{\nat}{\mathbb{N}}
\newcommand{\cercle}{\mathbb{S}^1}
\title{Dynamics of a family of rational operators of arbitrary degree}
\thanks{The first and the fourth authors are supported by  the grant PGC2018-095896-B-C22 (MCIU/AEI/FEDER/UE) and the project UJI-B2019-18. The second author was supported by Spanish Ministry of Economy
		and Competitiveness, through the María de Maeztu Programme for Units of Excellence in
		R\&D (MDM-2014-0445), and by BGSMath Banco de Santander Postdoctoral 2017. The third author was supported by the MINECO-AEI grant MTM-2017-86795-C3-2-P}
	\email{campos@uji.es}
	\address{ %
		Instituto de Matem\'aticas y Aplicaciones de Castell\'on\\
		Universitat Jaume I,  Spain}
	\email{jcanela@crm.cat}
	\address{ %
		Centre de Recerca Matemàtica; Barcelona Graduate School of Mathematics (BGSMath)\\
		Campus de Bellaterra, Edifici C\\
		08193 Bellaterra (Barcelona)\\
		Spain }
	\email{antonio.garijo@urv.cat}
	\address{Departament d'Enginyeria Inform\`atica i Matem\`atiques,
		Universitat Rovira i Virgili, 43007 Tarragona, Catalonia.}
	\email{vindel@uji.es}
	\address{ %
		Instituto de Matem\'aticas y Aplicaciones de Castell\'on\\
		Universitat Jaume I,  Spain}
\begin{document}

 \begin{abstract}
\;In this paper we analyse the dynamics of a family of rational operators coming from a fourth-order family of root-finding algorithms. We first show that it may be convenient to redefine the parameters to prevent redundancies and unboundedness of problematic parameters.
After reparametrization, we observe that these rational maps belong to a more general family $O_{a,n,k}$ of degree $n+k$ operators, which includes several other families of maps obtained from other numerical methods. We study the dynamics of $O_{a,n,k}$ and discuss for which parameters $n$ and $k$ these operators would be suitable from the numerical point of view.
 \end{abstract}
\maketitle
%

\section{Introduction}

 Iterative methods are the most usual tool to approximate  solutions of non linear equations. These methods require at least one initial estimate close enough of the solution sought. It is known that the methods converge if the initial estimation is chosen suitably. Hence, the search of such initial conditions has became an important part in the study of iterative methods.  To achieve this goal  we analyse these methods as discrete dynamical systems.

The application of iterative methods to find solutions of equations of the form $f(z)=0$, where  $f:\mathbb{\widehat{C}}\rightarrow \mathbb{\widehat{C}}$ and $\wcom$
denotes the Riemann sphere, gives rise to discrete dynamical systems given by the iteration of rational functions. The best known numerical algorithm is Newton's method, whose dynamics has been widely studied (see for instance \cite{blanchard2, sch}). Indeed, there are several results about the dynamical plane as well as the parameter plane of Newton's method applied to some concrete families of polynomials. The most studied case is Newton's method of cubic polynomials $q(z)=z(z-1)(z-\alpha)$, $\alpha \in \mathbb{\widehat{C}}$ (see  for instance \cite{Roesch, Tan} and references therein).

Recently, this dynamical study has been enlarged to other numerical methods (see, for example, \cite{cmqt}, \cite{famCheby}, \cite{cgmt}, \cite{cfmt}, \cite{familiaKing}  and references therein).
The dynamical properties related to an iterative method give important information about its stability. In recent studies, many authors (see \cite{aa}, \cite{jordi},  \cite{jordi2},  \cite{familiaKing}, \cite{guti},  for example) have found interesting results from a dynamical point of view. One of the main interests in these papers has been the study of the parameter spaces associated to the families of iterative
methods applied on low degree polynomials, which allows to distinguish the different dynamical behaviour.

In this paper, we consider an optimal fourth-order family of methods presented by R. Behl in \cite{behl}, whose dynamics is partially studied by K. Argyros and  A. Magre\~{n}\'an in \cite{aa}. The family of methods is given by

$$  y_{n} = x_{n}-\frac{2}{3} \frac{f(x_{n})}{f' (x_n)}$$

\vspace{-0.5cm}
\begin{equation}\label{eq:behl}
x_{n+1} =  x_n-\frac{((b^2-22b-27)f'(x_n)+3(b^2+10b+5)f'(y_n))f(x_n)}{2(b f'(x_n)+3f'(y_n))(3(b+1)f'(y_n)-(b+5)f'(x_n))},
\end{equation}
where $b$ is a complex parameter. When applying these methods on quadratic polynomials of the form $z^2+c$ (compare with Section~\ref{sec familia}) we obtain an operator which is conjugate to

\begin{equation*}
    O_b(z)=z^4 \frac{-11-6b+b^2+(-3+2b+b^2)z}{-3+2b+b^2+(-11-6b+b^2)z}.
\end{equation*}

In this paper we analyse the main dynamical properties of this operator. Before, we provide a short introduction to complex dynamics. We refer to  \cite{blanchard, beardon, Milnor} for a more detailed introduction to the topic.

We consider the dynamical system given by the iteration of a rational map $R:\wcom\rightarrow \wcom$.
A point $z_{0}\in \wcom$ is called \emph{fixed} if $R\left(z_{0}\right) =z_{0}$, and \emph{periodic} of  period $p>1$ if $R^{p}\left( z_{0}\right) =z_{0}$ and $R^l(z_0)\neq z_0$ for $1\leq l<p$.
Fixed points are classified depending on their multiplier $\lambda=R'(z_0)$. A fixed point $z_{0}$ is called \emph{attractor} if $|\lambda|<1$ (\emph{superattractor} if $\lambda=0$),  \emph{repulsor} if $|\lambda|>1$, and  \emph{indifferent} or neutral if $|\lambda|=1$. An indifferent fixed point is called \emph{rationally indifferent} (or \emph{parabolic}) if $\lambda=e^{2\pi ip/q}$, where $p,q\in\mathbb{N}$. Each attractive or rationally indifferent point $z_0$ has associated a \emph{basin of attraction}, denoted by $\mathcal{A}(z_{0})$, consisting of points $z\in \wcom$ whose orbit converges to $z_{0}$ under iteration of $R(z)$.
The same classification can be used for periodic points of any given period $p$ since they are fixed points of the map $R^p(z)$.

 The \emph{Fatou set}, $\mathcal{F}\left( R\right)$, of a rational map $R(z)$ consists of the points $z\in\wcom$   such that the family of iterates, $\{R(z), R^2(z), \ldots , R^n(z), \ldots \}$, is normal
in some open neighborhood $U$ of $z$. Its complement, the \emph{Julia set} $\mathcal{J}(R)$, consists of the points where the dynamics of $R(z)$ is chaotic.  The connected components of the Fatou set are called \emph{Fatou components} and are mapped among themselves under iteration.

A point $c\in\wcom$ is called a \emph{critical point} of  $R(z)$ if $R^{\prime}(c)=0$.  Critical points are relevant in holomorphic dynamics since all periodic Fatou components are related to critical points. In particular,  the basins of attraction of attracting and rationally indifferent points contain, at least, one critical point (see, for example, \cite{beardon, Milnor}).

When we apply a numerical method to find the solutions of a given equation we obtain an iterative method. If the equation is a polynomial, the operator associated with the iterative method is a rational map. The solutions of the equation are attractive fixed points of this map. However, it may happen that an initial condition converges under iteration to an attracting cycle  different from the solution of the equation. In that case,  we consider that the numerical method fails. We call such attracting cycles \textit{strange attractors}. Hence, when we study if a numerical method works adequately we need to analyse the existence of strange attractors. This can be done by studying the asymptotic behaviour of the iterates of the critical points. If such an strange attractor exists, the orbit of, at least,  a critical point will accumulate on it. Therefore,  in order to draw parameter planes we can iterate the different critical points and analyse their asymptotic behaviour. In Figure~\ref{pparam} we show the parameter plane of the operator $O_b$. In this figure we plot in black parameters for which a critical orbit does not converge to  any of the solutions of the original equation and, hence, there may be an strange attractor. See Section~\ref{SecPlanoParam} for a more detailed explanation on how this figure is produced.

\begin{figure}[h!]
\centering
\begin{minipage}[b]{0.63\linewidth}{
   \begin{tikzpicture}
    \begin{axis}[width=190pt, axis equal image, scale only axis,  enlargelimits=false, axis on top, 
    ]
      \addplot graphics[xmin=-50,xmax=10,ymin=-15,ymax=15] {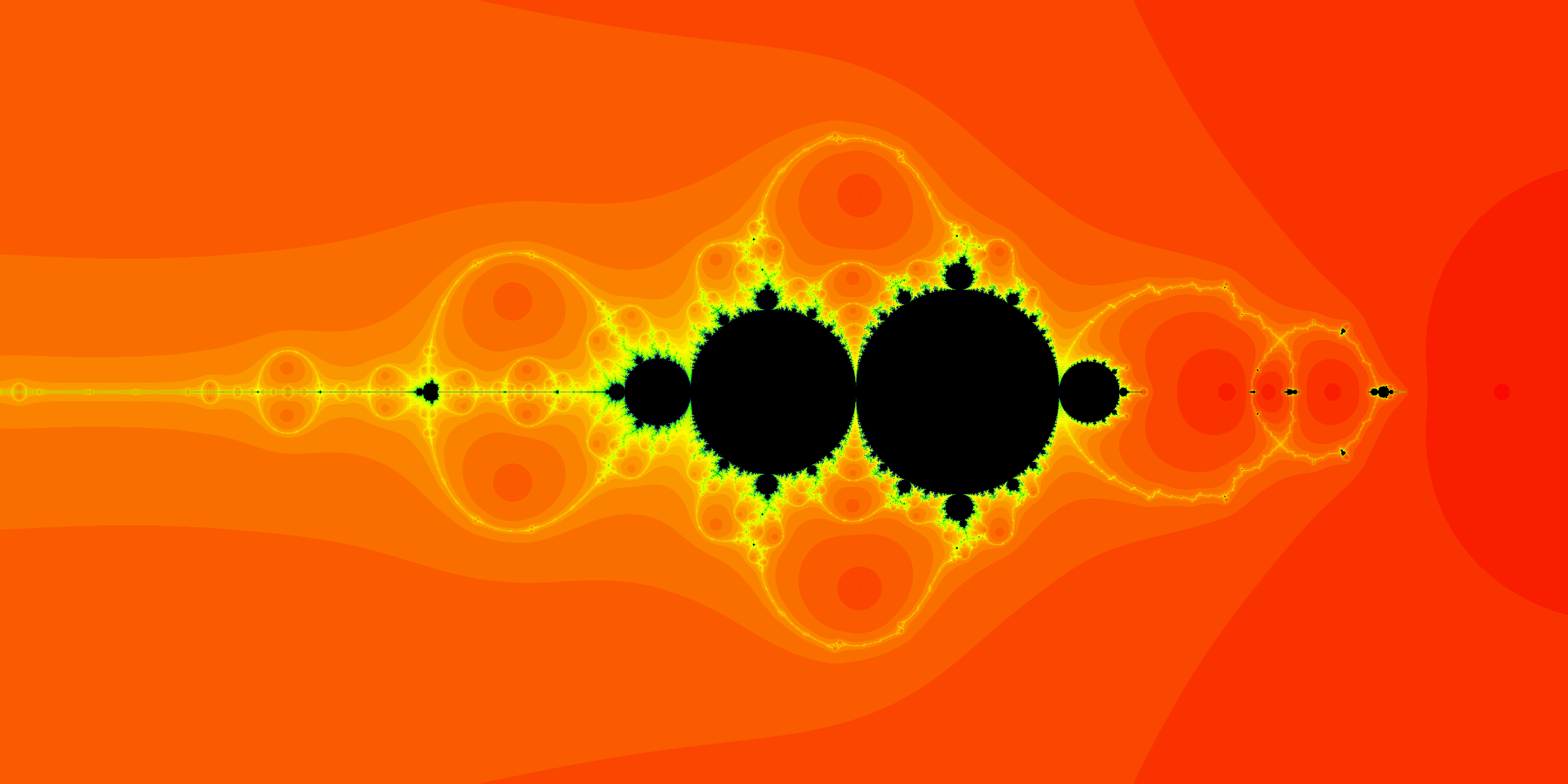};
    \end{axis}
  \end{tikzpicture}}
\end{minipage}
  \quad
  \vspace{0.5cm}
\begin{minipage}[b]{0.32\linewidth}{
    \begin{tikzpicture}
    \begin{axis}[width=110pt, axis equal image, scale only axis,  enlargelimits=false, axis on top, 
    ]
      \addplot graphics[xmin=-3,xmax=4,ymin=-3.5,ymax=3.5] {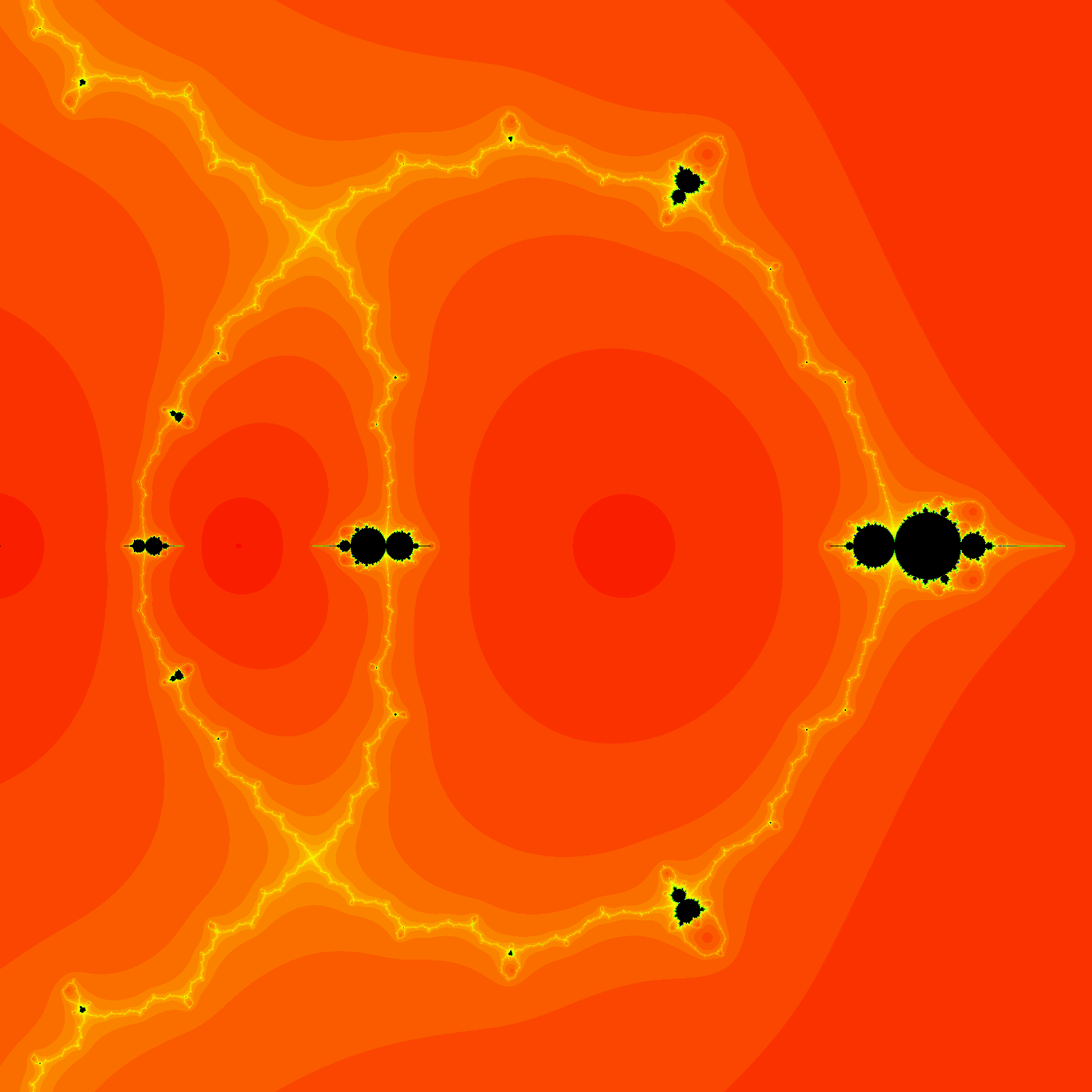};
    \end{axis}
  \end{tikzpicture}}
\end{minipage}
\vspace{-0.5cm}
\caption{{\protect\small Parameter plane of the operator $O_b$ (left) and a zoom in (right).}}
\label{pparam}
\end{figure}

In order to understand the dynamics of the operator $O_b$, we analyse the parameter plane shown in Figure~\ref{pparam}. In Section \ref{sec familia} we carry out this study. We obtain all the \textit{strange fixed points}, that is, all fixed points which do not correspond to the solutions of the quadratic equation. We also find the analytic expressions of the regions in the parameter plane of the operator $O_b$ where these strange fixed points are attractive and we locate these regions in the parameter space.

Once this initial study is done, we focus on two unwanted properties of the parameter plane of the family $O_b$. First, due to the fact that the coefficients of the rational map are quadratic (there are terms in $b^2$) two different parameters $b_1$ and $b_2$ may lead to the same operator $O_{b_1}=O_{b_2}$.
This is usually an unwanted feature when studying a parameter plane. Because of this, in Section~\ref{SecPlanoParam} we show how to reparametrize this family, obtaining a new operator
\begin{equation*}
O_a(z)=z^4 \frac{z-a}{1-az}.
\end{equation*}

The other unwanted feature of the dynamical plane of the operator $O_b$ is the unboundedness of `bad' parameters. In Figure~\ref{pparam} we can see  an `antenna' of parameters that spreads through the negative real axis for which a critical orbit does not converge to the solutions of the equation. This is an unwanted feature since may leave out of the numerical picture parameters for which relevant dynamics, such as convergence to strange attractors, take place. In Section \ref{familiaNueva} we prove that the antenna observed in the parameter plane of $O_b$ is actually unbounded. However, this unbounded antenna becomes a bounded set of parameters for $O_a$ (see Proposition \ref{PropAntena} and Figure \ref{planonuevo44}). Hence, the reparametrised operator $O_a$ possesses none of the unwanted features of the operator $O_b$ and, therefore, is a much better model to study the dynamics of the family of methods presented by R.\ Behl in \cite{behl}. In Section \ref{familiaNueva}  we also study the dynamics of the operator $O_a$ and analyse the relation between the parameter planes of $O_b$ and $O_a$.

To finish the paper, in Section~\ref{generalizedoperators}, we study the dynamics of a generalised version of the operator $O_a$. We study this generalised operator since $O_a$ is somehow similar to other operators that may be obtained from applying numerical methods to quadratic polynomials.
We considered the generalised family of operators
\begin{equation*}
O_{a,n,k}\left( z\right) =z^{n} \left(  \frac{z-a}{
1-az }\right)^{k}.
\end{equation*}

The map $O_{a, n,k}$ coincides with $O_a$ for $n=4$ and $k=1$. Moreover, for $n=3$ and $k=1$ this operator is obtained from the Chebyshev-Halley family of numerical methods (see \cite{famCheby}). This operator is also obtained from a family of root finding algorithms for $n=6$ and $k=2$  in \cite{aa6+2}. For certain combinations of $n$ and $k$, the dynamics of $O_{a,n,k}$ is very similar to the one of $O_a$. However, if $n-k\leq1$ the operator $O_{a,n,k}$ possesses some complicated dynamics which would not be desirable if obtained from a numerical method.
We finish the section proving that the operators $O_{a,n,k}$ do not have Herman rings. This is an important characteristic since Herman rings would provide open sets of initial conditions for which the numerical method fails.

\section{The optimal fourth-order family $O_b(z)$}\label{sec familia}

In this section we carry out a dynamical study of the optimal fourth-order family of methods presented by R. Behl \cite{behl} (see Equation~\eqref{eq:behl}).

We study the dynamics of this family applied on a degree two polynomial $p(z)=z^2+c$, $c\in\com$. The operator we obtain is conjugate to
\begin{equation}\label{operador}
    O_b(z)=z^4 \frac{-11-6b+b^2+(-3+2b+b^2)z}{-3+2b+b^2+(-11-6b+b^2)z}
\end{equation}
by means of the conjugation map $h(z)=\frac{z+i \sqrt{c}}{z-i \sqrt{c}}$. This conjugation map sends one of the roots of the polynomial $p(z)$ to zero and the other one to infinity. Moreover, $h(\infty)=1$.  Recall that $b\in\com$ is  the parameter of Behl's family.

Let us observe that the expression of  the operator (\ref{operador})  is simplified when $-3+2b+b^2=0$, and when $-3+2b+b^2=\pm(-11-6b+b^2)$. Then, the operator (\ref{operador}) has degree five, except for the following cases:
\begin{itemize}
\item For $b=-3$ or $b=1$, we have that $ O_{1}(z)=z^3$ and $ O_{-3}(z)=z^3$; in these cases, the operator has degree three.
\item For $b=-1$ or $b=1\pm 2\sqrt{2}$ we have that $O_{-1}(z)=z^4$ and $O_{1\pm 2\sqrt{2}}(z)=-z^4$; in these cases, the operator has degree four.
\end{itemize}

Moreover, there exist values of the parameters for which the operator increases its order of convergence:

\begin{prop}
The operator (\ref{operador}) has order of convergence 5 for $b=3\pm 2\sqrt{5}$.
\end{prop}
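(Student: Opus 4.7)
The plan is to identify the order of convergence of the iterative method with the local order of the superattracting fixed point of the conjugate operator $O_b$ at the origin. Since the conjugation $h$ sends one root of $p(z)=z^2+c$ to $z=0$ and the other to $z=\infty$, and the order of convergence of the original method at a simple root equals the multiplicity with which $O_b$ vanishes at the corresponding fixed point, it suffices to compute the local expansion of $O_b(z)$ at $z=0$.

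Writing $A(b)=-11-6b+b^2$ and $B(b)=-3+2b+b^2$, the operator takes the compact form
\begin{equation*}
O_b(z) = z^4 \cdot \frac{A(b)+B(b)\,z}{B(b)+A(b)\,z}.
\end{equation*}
For generic $b$ with $A(b)\neq 0$ and $B(b)\neq 0$, a series expansion at the origin gives $O_b(z) = (A(b)/B(b))\,z^4 + O(z^5)$, so the order of convergence is the generic value $4$. This order is strictly greater than $4$ exactly when the leading coefficient $A(b)/B(b)$ vanishes, that is, when $A(b)=0$, provided the denominator does not degenerate simultaneously.

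Solving the quadratic $A(b)=b^2-6b-11=0$ yields the two roots $b=3\pm 2\sqrt{5}$. To conclude, I would check that $B(b)\neq 0$ at these values so that no cancellation occurs: a direct computation gives $B(3\pm 2\sqrt{5}) = 32\pm 16\sqrt{5}\neq 0$. At these parameters the formula collapses to $O_b(z) = z^4\cdot (B(b)\,z/B(b)) = z^5$ identically, which in particular shows the order of convergence is exactly $5$. By the symmetry $O_b(1/z)=1/O_b(z)$, which follows from the coincidence of coefficients $A,B$ in numerator and denominator of the rational factor, the fixed point at $\infty$ behaves identically, so no further check is needed for the second root. There is no real obstacle in the argument; it is a direct computation with a quadratic, the only subtlety being the verification that $B(b)$ does not vanish at the critical values of $b$.
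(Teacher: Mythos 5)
Your argument is correct and is precisely the computation the paper leaves implicit (the proposition is stated without proof there): the order of convergence at the root equals the local degree of $O_b$ at the superattracting fixed point $z=0$, which jumps from $4$ to $5$ exactly when the leading coefficient $A(b)=b^2-6b-11$ vanishes, i.e.\ at $b=3\pm2\sqrt{5}$, where indeed $O_b(z)=z^5$ since $B(3\pm2\sqrt{5})=32\pm16\sqrt{5}\neq0$. Nothing further is needed.
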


\subsection{Fixed points}

The first step in the dynamical study of operator $O_{b}\left( z\right) $
consists of calculating its fixed
and critical points. As we will see, the number and the stability of the
fixed and critical points depend on the  parameter $b$. It is known that any
rational map of degree $d$ has $d+1$ fixed points
and $2d-2$ critical points (counting multiplicity) (see \cite{beardon}, for
example). Therefore, our operator has 6 fixed and 8 critical points, except for the values of the parameters studied above that lead to an operator of lower degree.

 The fixed points, given by \ $O_{b}\left( z\right) =z$, are $z=0,$ $z=\infty$, $z=1$ (if $b\neq1\pm 2\sqrt{2}$), $z=-1$ (if $b\neq-1$),  and
\[
z_{\pm}=\frac{11+6b-b^2\pm \sqrt{(5+10b+b^2)(17+2b-3b^2)}}{2(b-1)(b+3)}
\]
if  $ b\neq-3  $  and  $  b\neq1$.
The points $z=0$ and $z=\infty $ are associated to the roots of the
quadratic polynomial $p(z)=z^{2}+c$ and are superattractive fixed points for all parameter values.
The other fixed  points $z=\pm 1$ and $z=z_{\pm}$ are called \emph{strange fixed points}, since they do not correspond to the roots of the original polynomial.
We can observe that $z_{+}z_{-}=1$. The next proposition describes the parameters for which $z_+$ and $z_-$ collide and, hence, the number of strange fixed points decreases.

\begin{prop}
The number of fixed points of operator (\ref{operador}) decreases for $b= \frac{1}{3}(1\pm 2\sqrt{13})$ and for $b=-5\pm 2\sqrt{5}$.
\end{prop}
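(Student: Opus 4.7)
The plan is to identify values of $b$ where two of the six listed fixed points collide, since the total fixed-point count (counted without multiplicity) can only drop through a coincidence among $\{0,\infty,1,-1,z_+,z_-\}$. The four fixed points $0,\infty,\pm 1$ are independent of $b$, so the only $b$-dependent fixed points are $z_\pm$. Hence the obvious place to look is the vanishing of the quantity under the square root in the formula for $z_\pm$, namely the discriminant $(5+10b+b^2)(17+2b-3b^2)$, which is exactly when $z_+=z_-$.

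I would then solve each quadratic factor separately. The equation $b^2+10b+5=0$ yields $b=-5\pm 2\sqrt{5}$, and $3b^2-2b-17=0$ yields $b=\tfrac{1}{3}(1\pm 2\sqrt{13})$. This already gives the four parameter values in the statement, and at each of them $z_+=z_-$, so two strange fixed points merge into one and the total count drops.

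To make the picture cleaner (and to show that in fact the collision involves more than just $z_+$ and $z_-$), I would use the relation $z_+z_-=1$, which is immediate from Vieta applied to the polynomial whose roots are $z_\pm$. Combined with $z_+=z_-$, this forces $z_\pm^2=1$, so the merged value must be $\pm 1$. A short substitution confirms it: when $b^2+10b+5=0$ one replaces $b^2$ by $-5-10b$ in the numerator and denominator of $z_\pm$ and reads off $z_\pm=-1$, and when $3b^2-2b-17=0$ the same manoeuvre gives $z_\pm=1$. In both cases three of the strange fixed points $\{1,-1,z_+,z_-\}$ collapse to a single point.

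I do not anticipate a real obstacle here; the argument is essentially two quadratic solves plus one substitution. The only point to watch is the bookkeeping: one must observe that $z_\pm$ not only collide with each other but also with a pre-existing $\pm 1$, and the identity $z_+z_-=1$ is the cleanest tool to ensure that this is handled without missing or double-counting any coincidence.
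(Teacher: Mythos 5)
Your argument is correct and follows exactly the route the paper intends: the count drops precisely when the radicand $(5+10b+b^2)(17+2b-3b^2)$ in the formula for $z_{\pm}$ vanishes, giving $b=-5\pm 2\sqrt{5}$ and $b=\tfrac{1}{3}(1\pm 2\sqrt{13})$, and the paper likewise records (after Figure 2) that the merged point is $z_{\pm}=-1$ in the first case and $z_{\pm}=1$ in the second. Your extra observation that $z_+z_-=1$ forces the collision point to be $\pm 1$ is a nice touch consistent with the paper's remarks, so there is nothing to correct.
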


In addition, if $b=-1$ we have that $ O_{-1}(z)=z^4$ and that $z=-1$ is a pre-periodic point of the fixed point $z=1$. If $b=1\pm 2\sqrt{2}$, then $ O_{1\pm 2\sqrt{2}}(z)=-z^4$ and $z=1$ is a pre-periodic point of  $z=-1$. The stability of such fixed points is studied in the following propositions using the derivative of the operator, which is given by:

\begin{equation}\label{primab}
O_{b}^{\prime }\left( z\right) =
\end{equation}
 $$
\resizebox{.99\hsize}{!}{$4z^3 \frac{(b-1)(b+3)(-11-6b+b^2)+2(51+42b+4b^2-2b^3+b^4)z+(b-1)(b+3)(-11-6b+b^2)z^2}{((b-1)(b+3)+(-11-6b+b^2)z)^2}.$}
$$

\begin{prop} \label{-1b}
Let us write $b=\alpha + i \beta$ and
$$\nu(\alpha)=\sqrt{-81-14\alpha-\alpha^2+4\sqrt{2(7+\alpha)(29+5\alpha)}}.$$
 For $b\neq-1$, the strange fixed point $z=-1$ satisfies the following statements.

\begin{enumerate}[a)]
  \item  The fixed point $z=-1$ is attractive if
  $$\alpha\in\left(-9-2\sqrt{17}, -5-2\sqrt{2}\right)\cup\left( -5+2\sqrt{2}, -9+2\sqrt{17}\right)$$
  and $\beta\in(-\nu(\alpha),\nu(\alpha)).$ It is a superattractor if $b=-7\pm2\sqrt{10}$.
  \item  The point  $z=-1$ is indifferent if
  $$\alpha\in\left[-9-2\sqrt{17}, -5-2\sqrt{2}\right]\cup\left[ -5+2\sqrt{2}, -9+2\sqrt{17}\right]$$
  and $\beta=\pm\nu(\alpha)$.
  \item The fixed point $z=-1$ is repulsive  for any other value of $b\in\com$.
\end{enumerate}

\end{prop}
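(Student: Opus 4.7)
The plan is to compute the multiplier $\lambda:=O_b'(-1)$ explicitly and then translate the three cases $|\lambda|<1$, $|\lambda|=1$ and $|\lambda|>1$ into the conditions on $\alpha=\re b$ and $\beta=\im b$ stated in the proposition.

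First I would evaluate $O_b'$ at $z=-1$. The cleanest route is logarithmic differentiation of $O_b(z)=z^4 N(z)/D(z)$ with $N(z)=(-11-6b+b^2)+(-3+2b+b^2)z$ and $D(z)=(-3+2b+b^2)+(-11-6b+b^2)z$: one checks that $N(-1)=-8(b+1)$ and $D(-1)=8(b+1)$, and a short simplification gives
\[
\lambda=O_b'(-1)=\frac{b^2+14b+9}{4(b+1)}.
\]
The superattracting assertion in part~(a) is then immediate: $\lambda=0$ iff $b^2+14b+9=0$, i.e.\ $b=-7\pm 2\sqrt{10}$.

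Next I would analyse the locus $|\lambda|^2\leq 1$, equivalently $|b^2+14b+9|^2\leq 16|b+1|^2$. Writing $b=\alpha+i\beta$, the real and imaginary parts of $b^2+14b+9$ are $\alpha^2-\beta^2+14\alpha+9$ and $2\beta(\alpha+7)$, and after substitution the inequality simplifies to a quadratic in $y:=\beta^2$,
\[
y^2+2\bigl(\alpha^2+14\alpha+81\bigr)\,y+\bigl(\alpha^2+10\alpha+5\bigr)\bigl(\alpha^2+18\alpha+13\bigr)\leq 0.
\]
Two algebraic identities drive the derivation: the factorisation $(\alpha^2+14\alpha+9)^2-16(\alpha+1)^2=(\alpha^2+10\alpha+5)(\alpha^2+18\alpha+13)$ produces the constant term, while $\alpha^2+14\alpha+81=(\alpha+7)^2+32>0$ makes the linear coefficient strictly positive. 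A second factorisation $(\alpha^2+14\alpha+81)^2-(\alpha^2+10\alpha+5)(\alpha^2+18\alpha+13)=32(\alpha+7)(5\alpha+29)$ gives a clean discriminant, so the quadratic formula yields roots $y=-(\alpha^2+14\alpha+81)\pm 4\sqrt{2(7+\alpha)(29+5\alpha)}$, and the upper root is exactly $\nu(\alpha)^2$.

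Since the coefficient of $y$ is strictly positive, the inequality admits a solution with $y\geq 0$ precisely when its constant term is non-positive, i.e.\ $(\alpha^2+10\alpha+5)(\alpha^2+18\alpha+13)\leq 0$. A sign analysis of these two factors (whose real roots give the endpoints of the intervals of $\alpha$ in part~(a)) produces the two intervals of $\alpha$, and the condition on $\beta$ then reads $\beta^2<\nu(\alpha)^2$, i.e.\ $\beta\in(-\nu(\alpha),\nu(\alpha))$. Part~(b) is the equality case $\beta^2=\nu(\alpha)^2$, and part~(c) is the complement in $\com\setminus\{-1\}$. The main technical obstacle is spotting the two non-obvious factorisations above, which are precisely what make the compact description via $\nu(\alpha)$ possible; apart from that, the bookkeeping is routine, modulo excluding the degenerate case $b=-1$ (where $z=-1$ is not a strange fixed point and the formula for $\lambda$ is undefined) and carefully intersecting the two sign conditions to identify the correct connected components.
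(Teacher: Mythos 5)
Your computation of the multiplier $O_b'(-1)=\frac{b^2+14b+9}{4(b+1)}$, of the superattracting parameters $b=-7\pm2\sqrt{10}$, and of the locus $|\lambda|\le 1$ agrees with the paper: your quadratic in $y=\beta^2$ is exactly the paper's quartic $65+220\alpha+198\alpha^2+28\alpha^3+\alpha^4+(162+28\alpha+2\alpha^2)\beta^2+\beta^4=0$, and I have checked that your two factorisations, $(\alpha^2+14\alpha+9)^2-16(\alpha+1)^2=(\alpha^2+10\alpha+5)(\alpha^2+18\alpha+13)$ and $(\alpha^2+14\alpha+81)^2-(\alpha^2+10\alpha+5)(\alpha^2+18\alpha+13)=32(\alpha+7)(5\alpha+29)$, are both correct. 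From that point on your argument is genuinely more complete than the paper's: the paper stops at the implicit curve $\left|\frac{9+14b+b^2}{4+4b}\right|=1$ and infers attractiveness ``inside'' and repulsiveness ``outside'' from the location of the superattractor, without ever deriving the intervals of $\alpha$ or the formula for $\nu(\alpha)$; you solve the quadratic in $y$ explicitly, which is what actually substantiates the explicit description in the statement (including the observation that the lower root is always negative, so the only relevant root is $\nu(\alpha)^2$, and that a nonnegative solution exists precisely when the constant term is nonpositive).

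One point needs attention before your write-up can stand as a proof of the statement \emph{as printed}. Your derivation gives the endpoints of the $\alpha$-intervals as the real roots of $(\alpha^2+10\alpha+5)(\alpha^2+18\alpha+13)$, i.e.\ $-9\pm2\sqrt{17}$ and $-5\pm2\sqrt{5}$, whereas the proposition prints $-5\pm2\sqrt{2}$. So your parenthetical claim that these roots ``give the endpoints of the intervals of $\alpha$ in part (a)'' is not literally true of the stated intervals. Your values are the correct ones: for instance $\nu(-5-2\sqrt{2})^2<0$, and the superattracting parameter $-7+2\sqrt{10}\approx-0.675$ lies in $\left(-9+2\sqrt{17},-5+2\sqrt{5}\right)\approx(-0.754,-0.528)$ but not in $\left(-5+2\sqrt{2},-9+2\sqrt{17}\right)\approx(-2.172,-0.754)$, contradicting part (a) as printed. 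This points to a typo in the proposition ($\sqrt{2}$ for $\sqrt{5}$) rather than to an error in your argument, but you should state the corrected endpoints explicitly instead of asserting agreement with the printed ones.
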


\proof
\;The fixed point $z=-1$ is indifferent on the curve defined by
$$ \left|O_{b}^{\prime }\left( -1\right)\right| =1, \;\;\;\mbox{ that is }\;\;\;
 \left|\frac{9+14b+b^2}{4+4b}\right|=1.$$

Writing  $b=\alpha+i \beta$ and simplifying the previous expression we obtain
\[
65+220\alpha+198\alpha^2+28\alpha^3+\alpha^4+
(162+28\alpha+2\alpha^2)\beta^2+\beta^4=0.
\]
For $b=-7\pm2\sqrt{10}$ we have  that $O_{b}^{\prime }\left( -1\right) =0$ and $z=-1$ is a superattractor for this value of $b$. As this value is inside the curve previously defined, the fixed point $z=-1$ is attractive inside the curve defined by the previous expression and it is repulsive outside the curve and the above statements are proved. \hfill $\square$
\endproof

The proofs of the next  propositions are analogous to the one of Proposition~\ref{-1b}.
\begin{prop}\label{1b}
Let us write $b=\alpha + i \beta$ and
$$\nu(\alpha)=\sqrt{\frac{-95+14\alpha-15\alpha^2+4 \sqrt{2(35-160\alpha+173\alpha^2)}}{15}}.$$
For $b\neq 1\pm 2\sqrt{2}$,  the strange fixed point $z=1$ satisfies the following statements.

\begin{enumerate}[a)]
  \item  The fixed point $z=1$ is attractive if
     $$\alpha\in\left(\frac{1}{3}\left(1-2\sqrt{13}\right), \frac{1}{5}\left(3-2\sqrt{41}\right)\right) \cup \left(\frac{1}{3}\left(1+2\sqrt{13}\right), \frac{1}{5}\left(3+2\sqrt{41}\right)\right)$$
   and $\beta\in(-\nu(\alpha),\nu(\alpha))$. It is a superattractor if $b=-2$ or $b=3$.
  \item  The fixed point $z=1$ is indifferent if
      $$\alpha\in\left[\frac{1}{3}\left(1-2\sqrt{13}\right), \frac{1}{5}\left(3-2\sqrt{41}\right)\right] \cup \left[\frac{1}{3}\left(1+2\sqrt{13}\right), \frac{1}{5}\left(3+2\sqrt{41}\right)\right]$$
    and $\beta=\pm \nu(\alpha)$.
  \item  The fixed point $z=1$ is repulsive for any other value of $b\in\com$.
\end{enumerate}
\end{prop}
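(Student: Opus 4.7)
The plan is to mirror the argument used for Proposition~\ref{-1b}. First I would evaluate $O_b'(1)$ directly from the derivative formula (\ref{primab}). Substituting $z=1$ in the numerator yields a quartic in $b$, and the denominator becomes $((b-1)(b+3)+(b^2-6b-11))^2 = 4(b^2-2b-7)^2$. The key algebraic observation, which I would verify by polynomial division, is that the quartic factors as $(b-3)(b+2)(b^2-2b-7)$, cancelling one copy of $b^2-2b-7$ against the denominator. This yields the clean identity
$$O_b'(1) = \frac{4(b-3)(b+2)}{b^2-2b-7},$$
which vanishes precisely at $b=-2$ and $b=3$, establishing the superattractor claim in part (a).

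Next I would analyse the curve $|O_b'(1)|=1$. Clearing denominators and squaring, this is equivalent to the real equation $16\,|b-3|^2|b+2|^2 = |b^2-2b-7|^2$. Writing $b=\alpha+i\beta$ and expanding, both sides are polynomials in $\alpha$ and $\beta$ that, viewed in $\beta^2$, are quadratic; their difference yields a quadratic in $\beta^2$ whose leading coefficient turns out to be $15$. Applying the quadratic formula and retaining the "plus" root (the "minus" root being irrelevant outside the parameter range of interest) produces exactly
$$\nu(\alpha)^2 = \frac{-95 + 14\alpha - 15\alpha^2 + 4\sqrt{2(35-160\alpha+173\alpha^2)}}{15}.$$
The intervals of $\alpha$ appearing in (a) and (b) are then characterised as the set where $\nu(\alpha)^2 \geq 0$; their four endpoints, namely $\tfrac{1}{3}(1\pm 2\sqrt{13})$ and $\tfrac{1}{5}(3\pm 2\sqrt{41})$, are exactly the real roots of the equation $\nu(\alpha)=0$.

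To finish, I would invoke the same continuity-and-connectedness reasoning that closed the proof of Proposition~\ref{-1b}: the real algebraic curve $|O_b'(1)|=1$ separates $\com$ into open components on which $|O_b'(1)|$ is either strictly less than or strictly greater than $1$. Since the superattractors $b=-2$ and $b=3$ satisfy $O_b'(1)=0$ and clearly lie inside the two bounded components cut out by this curve (verified by noting $-2 \in (\tfrac{1}{3}(1-2\sqrt{13}),\tfrac{1}{5}(3-2\sqrt{41}))$ and $3 \in (\tfrac{1}{3}(1+2\sqrt{13}),\tfrac{1}{5}(3+2\sqrt{41}))$ with $\beta=0$ inside $(-\nu(\alpha),\nu(\alpha))$), these components form the attracting locus, the boundary curve is the indifferent locus, and the complement is the repelling locus.

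The main obstacle is purely computational rather than conceptual: one must accurately factor the quartic numerator of $O_b'(1)$ to extract the cancellation with $b^2-2b-7$, and then carefully reduce $16\,|b-3|^2|b+2|^2=|b^2-2b-7|^2$ to the stated formula for $\nu(\alpha)^2$, keeping track of the factor of two between the $30$ that naturally appears in the quadratic formula and the $15$ in the statement. Once these bookkeeping steps are carried out the topological conclusion transfers verbatim from the proof of Proposition~\ref{-1b}.
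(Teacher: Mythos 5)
Your proposal is correct and follows exactly the route the paper intends: the paper gives no separate proof of this proposition, stating only that it is analogous to Proposition~\ref{-1b}, and your computation (the identity $O_b'(1)=4(b-3)(b+2)/(b^2-2b-7)$, the reduction of $|O_b'(1)|=1$ to a quadratic in $\beta^2$ with leading coefficient $15$, and the inside/outside argument anchored at the superattractors $b=-2,3$) is precisely that analogous argument, with the algebra checking out.
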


\begin{prop} \label{exb}
Let us write $b=\alpha + i \beta$. For $b \neq -3 $ and $ b\neq1$, the strange fixed points $z_{\pm}$ are indifferent if
\begin{eqnarray*}
  (5+10\alpha+\alpha^2)(-17-2\alpha+3\alpha^2)(-67-204\alpha-26\alpha^2+36\alpha^3+5\alpha^4) &+&   \\
 4(8259+5994\alpha+1225\alpha^2-4\alpha^3+709\alpha^4+186\alpha^5+15\alpha^6)\beta^2  &+&   \\
 2(6069+1508\alpha+1606\alpha^2+372\alpha^3+45\alpha^4)\beta^4&+& \\ 4(299+62\alpha+15\alpha^2)\beta^6+15\beta^8 &=& 0
\end{eqnarray*}
\end{prop}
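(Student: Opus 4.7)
The plan is to follow the template of Proposition~\ref{-1b}: derive a closed-form expression for the multiplier $O_b'(z_\pm)$, then expand $|O_b'(z_\pm)|=1$ with $b=\alpha+i\beta$ and match against the polynomial in the statement. The first preparatory step is to introduce the abbreviations $A=b^2-6b-11$ and $B=b^2+2b-3=(b-1)(b+3)$, so that
\[O_b(z)=z^4\frac{A+Bz}{B+Az}.\]
Writing $O_b(z)-z$ over a common denominator, the numerator factors as $(z-1)(z+1)(Bz^2+Az+B)$, so $z_\pm$ are precisely the roots of $Bz^2+Az+B=0$, and in particular $z_+z_-=1$. This reflects the involutive symmetry $O_b(1/z)=1/O_b(z)$, which also forces $O_b'(z_+)=O_b'(z_-)$; the two strange fixed points therefore become attracting or repelling simultaneously, justifying a single indifference condition for both.

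The heart of the proof is the simplification of $O_b'(z_\pm)$. A direct quotient-rule computation gives
\[O_b'(z)=\frac{z^3\bigl(4AB+(3A^2+5B^2)z+4AB\,z^2\bigr)}{(B+Az)^2}.\]
At $z=z_\pm$ I would apply the defining relation $Bz_\pm^2+Az_\pm+B=0$ twice. First, to replace $4AB\,z_\pm^2$ by $-4A^2z_\pm-4AB$, collapsing the numerator to $(5B^2-A^2)\,z_\pm$. Second, after expanding $(A+Bz_\pm)(B+Az_\pm)=AB(1+z_\pm^2)+(A^2+B^2)z_\pm$, I would use $1+z_\pm^2=-Az_\pm/B$ to reduce this product to $B^2z_\pm$. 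Combining these with the identity $z_\pm^4=z_\pm(B+Az_\pm)/(A+Bz_\pm)$ coming from $O_b(z_\pm)=z_\pm$ yields the strikingly compact formula
\[O_b'(z_\pm)=\frac{5B^2-A^2}{B^2}.\]

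The indifference condition therefore reads $|5B^2-A^2|^2=|B|^4$. Writing $b=\alpha+i\beta$ and expanding both sides, this becomes a single polynomial equation of total degree $8$ in $(\alpha,\beta)$, and matching it against the polynomial in the statement is a mechanical algebraic identity. As a useful sanity check, the real specialisation $\beta=0$ factors as $(4B^2-A^2)(6B^2-A^2)=0$, with $4B^2-A^2=(b^2+10b+5)(3b^2-2b-17)$ and $6B^2-A^2=5b^4+36b^3-26b^2-204b-67$; these three factors appear exactly on the first line of the stated equation. The only real obstacle is the bookkeeping in the full complex expansion: it is routine but bulky, and best verified with computer algebra once the closed-form multiplier is in hand.
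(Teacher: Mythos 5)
Your proposal is correct and follows the same route the paper intends (the paper only says the proof is ``analogous to the one of Proposition~\ref{-1b}'', i.e.\ impose $|O_b'(z_\pm)|=1$ and expand in $b=\alpha+i\beta$); your reduction of the multiplier to the closed form $O_b'(z_\pm)=(5B^2-A^2)/B^2$ via the relation $Bz_\pm^2+Az_\pm+B=0$ is a clean and verifiable way to carry that out, and it is consistent with the paper's superattracting parameters $b=-2\pm\sqrt{5}\pm\sqrt{10\mp 2\sqrt{5}}$. I checked that expanding $|5B^2-A^2|^2=|B|^4$ in $(\alpha,\beta)$ reproduces every coefficient of the stated degree-$8$ polynomial (including the $15\beta^8$ and the $\beta^2,\beta^4,\beta^6$ lines), so the remaining ``bookkeeping'' indeed closes.
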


\begin{figure}[h!]
    \centering
    \includegraphics[width=8.5cm]{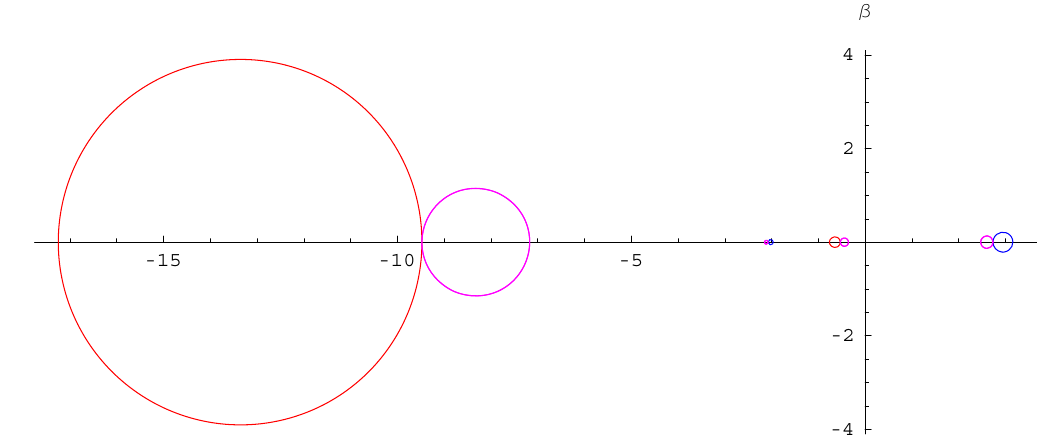}
  \caption{Regions of stability of the strange fixed points for the operator $O_b$.}
  \label{estabilidad_fijos}
\end{figure}

The regions defined in the previous propositions can be observed in Figures~\ref{pparam} and \ref{estabilidad_fijos}.  In Figure  \ref{estabilidad_fijos}, the red curves correspond to $|O_{b}^{\prime }\left( -1\right)| =1$, the blue curves  to $|O_{b}^{\prime }\left( 1\right)| =1$, and the magenta curves to $|O_{b}^{\prime }\left( z_{\pm}\right)| =1$. The strange fixed points  $z_{\pm}$ are attractive for values of the parameter $b$ inside the magenta curves. Indeed, they are superattractors for the points  $b=-2+\sqrt{5}\pm \sqrt{10-2\sqrt{5}}$ and $b=-2-\sqrt{5}\pm \sqrt{10+2\sqrt{5}}$, which are inside these curves. Moreover, for $b= \frac{1}{3}\left(1 \pm 2\sqrt{13}\right)$ we have $z_{\pm}=1$ and for  $b=-5 \pm 2\sqrt{5}$ we have $z_{\pm}=-1$. These are the values where the magenta and blue curves coincide and where the magenta and red curves coincide, respectively.

\subsection{Critical points}

As every attractor has a critical point in its basin of attraction, the iteration of free critical points tells us the existence of strange attractors. Hence, to be able to draw the parameter plane it is important to locate all critical points.

Critical points satisfy $O_{b}^{\prime }\left( z\right) =0.$
The expression of $O_{b}^{\prime }$ is given in (\ref{primab}). We obtain that the fixed points $z=0$ and $z=\infty$ are also critical points of multiplicity three and,
consequently, these fixed points (that are associated to the roots of the
quadratic family) are superattractive.
We also obtain other two critical points that are \emph{free critical points}: they are not tied to any fixed (or periodic) point. Their expression is
\begin{equation}\label{criticos}
\resizebox{.99\hsize}{!}{$c_{\pm}=\frac{-51-42b-4b^2+2b^3-b^4\pm 2\sqrt{(b-3)(b+1)(b+2)(b^2-2b-7)(b^2+14b+9)}}{(b-1)(b+3)(b^2-6b-11)}$}
\end{equation}
for $b\neq1, b\neq-3$ and $ b\neq 3\pm 2\sqrt{5}$.
They satisfy $c_{+} =1/c_{-}$. Since the operator $O_b$ is conjugate to itself by the map $I(z)=1/z$ (see Lemma~\ref{conjugacion}), it follows that the orbits of both critical points have the same asymptotic behaviour. Hence, it is enough to iterate one of the critical points to  draw the parameter plane.


\subsection{The parameter plane} \label{SecPlanoParam}

As mentioned before, we can draw the parameter plane of the operator by iterating one of the free critical points and studying its asymptotic behaviour. In this paper, parameter planes are done as follows. We take a grid of $1501\times1501$ points ($3001\times1501$ for Figure~\ref{pparam} (left)). Then we iterate the critical point $c_+$ up to 100 times. If before reaching 100 iterations the iterated point $w$ is close enough to $z=0$ or $z=\infty$ ($|w|< 10^{-8}$ or $|w|> 10^8$), then we conclude that the critical orbit converges to one of the roots of the polynomial and plot the parameter using a scaling from pallid blue to green to yellow and to red depending on the number of iterates taken before escaping. If the critical orbit has not escaped to $z=0$ or $z=\infty$ in less than 100 iterates, then we point the parameter in black. Black parameters are, precisely, those parameters for which the critical orbit may have accumulated on an strange attractor. Hence, black parameters are not desirable for the stability of the numerical method.

In Figure \ref{pparam} (left) we show the parameter plane of the operator $O_b$. In Figure~\ref{pparam} (right) we do a zoom in so that the little regions corresponding to parameters for which the strange fixed points are attractors can be observed.

Let us point out that the big black region in Figure \ref{pparam} (left) corresponds to values of the parameter $b$ where $z=-1$ is attractive, the black region to its right corresponds to values of the parameter where $z_{\pm}$ are attractive, and the big black region located on its left corresponds to parameters where an attractive  periodic orbit of period two appears. The bigger black disk in Figure~\ref{pparam} (right) corresponds to a region of parameters for which $z=1$ is attractive.

Another important feature to point out from Figure \ref{pparam} is the unbounded set of `bad' parameters which appears following the negative real axis. This set of parameters, that we shall call \textit{antenna}, corresponds to parameters for which the orbits of the critical points do not converge to $z=0$ nor to $z=\infty$. In Proposition \ref{PropAntena} we analyse why this antenna appears and prove that it is actually unbounded.

We have also noticed a duplicity in the dynamical information obtained in this section. This  duplicity appears from the terms in $b^2$ in Equation \eqref{operador}. Because of this term, one can find two different parameters $b_1$ and $b_2$ which lead to the same operator ($O_{b_1}=O_{b_2}$). In order to avoid this phenomenon we introduce the parameter $a=a(b)$ given by

\begin{equation}\label{cambio}
    a= \frac{11+6b-b^2}{-3+2b+b^2}.
\end{equation}
With this parameter, the operator \eqref{operador} is expressed as

\begin{equation} \label{operador2}
O_a(z)=z^4 \frac{z-a}{1-az}.
\end{equation}

Given that $(11+6b-b^2)/(-3+2b+b^2)$  is a rational map of degree two, for every parameter $a$ there exist two parameters $b_1$ and $b_2$ such that $a=a(b_1)=a(b_2)$. In Figure \ref{cambioVariable} we show this (2-1)-correspondence for the case of real parameters.

\section{Dynamical study of the family $O_a\left(z\right)$} \label{familiaNueva}

The goal of this section is to provide a brief study of the dynamics of the operator $O_a$ \eqref{operador2} and to analyse the existence of antennas in the parameter plane.

The operator $O_a$ has degree five except for $a=1$ and $a=-1$, parameters for which the degree is four: $O_1(z)=-z^4$, $O_{-1}(z)=z^4$. Moreover, if $a=0$ then $O_0(z)=z^5$.
For $a\neq \pm1$, the fixed points of operator (\ref{operador2}) are $0$ and $\infty$, that corresponds to the roots of the polynomial $x^2+c$, and the strange fixed points $1,-1$, and $z_{\pm}=\frac{a\pm\sqrt{a^2-4}}{2}$. Its free critical points are given by

\begin{equation} \label{ecCriticos+-}
c_{\pm}=\frac{5+3a^2\pm \sqrt{(a+1)(a-1)(3a-5)(3a+5)}}{8a}.
\end{equation}


The next lemma provides a symmetry in the dynamical plane.

\begin{lem}\label{conjugacion}
Let $I(z)=1/z$. Then, fixed any $a\in\com$ and $z\in\wcom$, $z\neq 0$, we have that
$$
O_{a}\circ I(z)=I\circ O_{a}(z).
$$
\end{lem}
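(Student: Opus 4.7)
The plan is to verify the identity by a direct computation on both sides, using the explicit formula $O_a(z) = z^4 \frac{z-a}{1-az}$ from \eqref{operador2}. Since $I(z) = 1/z$ is an involution, substituting $1/z$ into $O_a$ and simplifying should match taking the reciprocal of $O_a(z)$.

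Concretely, I would first compute
\[
O_a(I(z)) = O_a\!\left(\tfrac{1}{z}\right) = \frac{1}{z^4}\cdot\frac{\tfrac{1}{z}-a}{1-\tfrac{a}{z}}.
\]
Multiplying numerator and denominator of the right-hand fraction by $z$ gives $\frac{1-az}{z-a}$, so that
\[
O_a(I(z)) = \frac{1-az}{z^4(z-a)}.
\]
Then I would compute the other side,
\[
I(O_a(z)) = \frac{1}{O_a(z)} = \frac{1-az}{z^4(z-a)},
\]
and observe that the two expressions agree. The identity therefore holds on the common domain where all operations are defined, namely $z\neq 0$ (and, implicitly, the formal equality extends to $\wcom$ by continuity at the poles $z=a$, $z=1/a$, and $z=\infty$).

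The hardest part is essentially bookkeeping: one should point out that although $I$ is undefined at $z=0$ and $O_a$ may have a pole at $z=1/a$, both sides of the equation extend holomorphically to $\wcom\setminus\{0\}$ and coincide on a dense set, so the equality holds on all of $\wcom\setminus\{0\}$ as stated. No other subtlety arises, and no further structural argument is needed since the verification is purely algebraic.
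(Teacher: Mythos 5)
Your computation is correct and follows exactly the paper's own argument: substitute $1/z$ into the explicit formula for $O_a$, multiply the inner fraction by $z/z$, and recognize the result as $1/O_a(z)$. The extra remark about extending by continuity to the poles is harmless but not needed beyond what the paper states.
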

\proof
$$O_{a}\circ I(z)=\left(\frac{1}{z}\right)^{4}\frac{\cfrac{1}{z}-a }{1-a \cfrac{1}{z}}\cdot\frac{z}{z}=\frac{1}{z^4\frac{z-a}{1-a z}}=I\circ O_{a}(z).$$
\hfill $\square$
\endproof

The importance of this lemma comes from the fact that it ties the dynamics of all pairs of points $v$ and $w$ such that $v=1/w$. It follows from the lemma that $O_{a}(v)=O_{a}(I(w))=I( O_{a}(w))=1/O_{a}(w)$.
Iterating this relation we have that $O_{a}^n(v)=1/O_{a}^n(w)$, for all $n\in \nat$. We can apply this property to the critical points since $c_+=1/c_-$.  We can conclude that if one critical orbit converges to $z=0$ then the other one converges to $z=\infty$. This implies that it is enough to analyse the asymptotic behaviour of one of the critical orbits in order  to study the existence of attractors different of the roots.

The following lemma shows a symmetry in the parameter plane (see Figure~\ref{planonuevo44}). More specifically, it shows that the operators $O_a$ and $O_{-a}$ are conjugate.

\begin{lem}\label{simetria}
Let $h(z)=-z$. Then, fixed any $a\in\com$ and $z\in\wcom$, $z\neq 0$, we have that
$$
h^{-1} \circ O_{-a} \circ h=O_{a}.
$$
\end{lem}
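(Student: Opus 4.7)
The plan is to prove this by direct substitution into the defining formula \eqref{operador2} for $O_a$, using the fact that $h$ is an involution.

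First I would observe that $h(z)=-z$ satisfies $h^{-1}=h$, so the identity to be verified reduces to $h\circ O_{-a}\circ h = O_a$, i.e.\ $-O_{-a}(-z)=O_a(z)$ for every $z\in\wcom\setminus\{0\}$. I would then compute $O_{-a}(-z)$ directly from \eqref{operador2}: the factor $(-z)^4$ equals $z^4$, while the rational factor becomes
\begin{equation*}
\frac{(-z)-(-a)}{1-(-a)(-z)}=\frac{-(z-a)}{1-az}.
\end{equation*}
Multiplying these two factors gives $O_{-a}(-z)=-z^{4}\dfrac{z-a}{1-az}=-O_a(z)$.

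Finally, I would apply $h^{-1}=h$ on the left, which changes the sign once more and yields $h^{-1}(O_{-a}(h(z)))=-(-O_a(z))=O_a(z)$, as required. The cases $z=0$ and $z=\infty$ (fixed by both $h$ and the operators) can be checked separately, or handled by continuity since both sides are rational maps of $\wcom$ agreeing on a dense set.

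There is no real obstacle here: the statement is a symmetry of the explicit formula, and the only thing to be careful about is keeping track of the two sign cancellations (one from $(-z)^4=z^4$ being even in $z$ but the linear factor $z-a$ being odd under $(z,a)\mapsto(-z,-a)$, and one from the outer $h^{-1}$). The proof is essentially one line of algebra once one writes $-O_{-a}(-z)$ out in full.
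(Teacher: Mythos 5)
Your proof is correct and is essentially the same one-line direct substitution the paper gives: compute $O_{-a}(-z)=-O_a(z)$ from the explicit formula and apply $h^{-1}=h$ to flip the sign back. Nothing further is needed.
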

\proof
$$h^{-1} \circ O_{-a}\circ h(z)=h^{-1}\left(-O_{-a}(-z)\right)=-(-z)^4 \frac{-z+a}{1-a z}=O_{a}(z). \;\;\;\; \square$$

\endproof

\begin{figure} [h!!]
\centering
 \begin{tikzpicture}
    \begin{axis}[width=180pt, axis equal image, scale only axis,  enlargelimits=false, axis on top, 
    ]
      \addplot graphics[xmin=-3.2,xmax=3.2,ymin=-3.2,ymax=3.2] {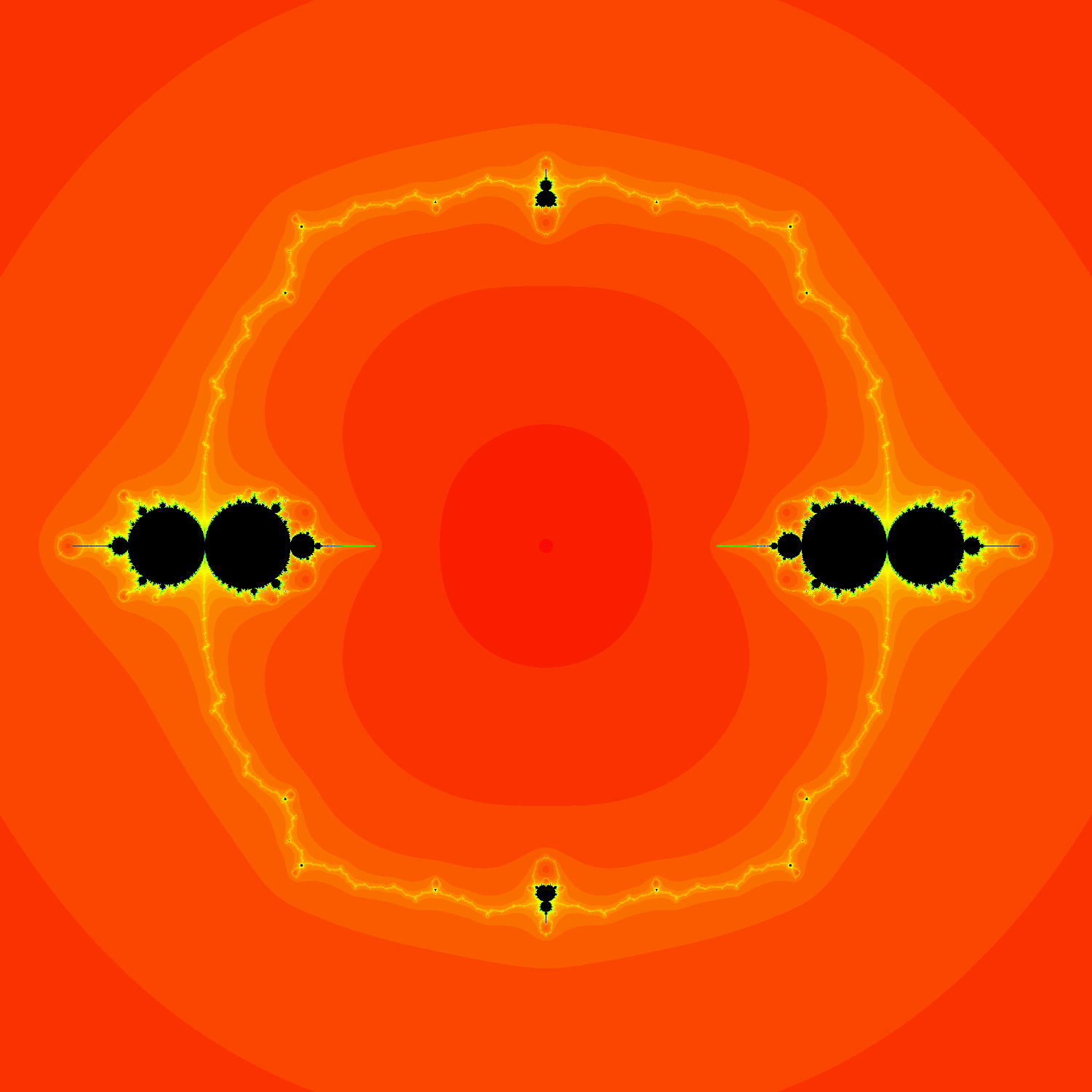};
    \end{axis}
  \end{tikzpicture}
  \\
  \caption{Parameter plane of the operator $O_a$.}\label{planonuevo44}
\end{figure}

In Figure \ref{planonuevo44}, the two big black regions inside the collar correspond to values of the parameter where $z=1$ and $z=-1$ are attractive and the two big black regions outside the collar correspond to values of the parameter where the strange fixed points $z_{\pm}=\frac{a\pm\sqrt{a^2-4}}{2}$ are attractive. In the next propositions we state rigorously these claims. Their proofs are analogous to the one of Proposition~\ref{-1b}.

\begin{prop} \label{1a}
The strange fixed point $z=1$ satisfies the following statements.
\begin{enumerate}[a)]
  \item  The point $z=1$ is attractive if
  $\left|a-\frac{7}{4}\right|<\frac{1}{4}$
  and it is a superattractor for $a=\frac{5}{3}$.
  \item  If
   $\left|a-\frac{7}{4}\right|=\frac{1}{4}$,
 then $z=1$ is an indifferent fixed point.
  \item The point $z=1$ is  repulsive for any other value of $a\in\com$.
\end{enumerate}

\end{prop}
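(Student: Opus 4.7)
The plan is to follow exactly the template laid out in the proof of Proposition~\ref{-1b}: compute $O_a'(1)$ explicitly, translate the indifference condition $|O_a'(1)|=1$ into an equation in $a$ and $\bar a$, recognise that equation as a circle, then use continuity to deduce attractivity inside and repulsivity outside.

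First I would compute the derivative. Writing $O_a(z)=(z^5-az^4)/(1-az)$ and applying the quotient rule, I get
\begin{equation*}
O_a'(z)=\frac{(5z^4-4az^3)(1-az)+a(z^5-az^4)}{(1-az)^2}.
\end{equation*}
Evaluating at $z=1$ and factoring out $(1-a)$ from the numerator gives
\begin{equation*}
O_a'(1)=\frac{(1-a)(5-3a)}{(1-a)^2}=\frac{5-3a}{1-a},\qquad a\neq 1.
\end{equation*}
From this expression the superattractor condition $O_a'(1)=0$ gives $a=5/3$ immediately.

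Next I would derive the indifference curve. The condition $|O_a'(1)|=1$ is equivalent to $|5-3a|^2=|1-a|^2$. Expanding both sides in terms of $a\bar a$ and $a+\bar a$ and simplifying yields
\begin{equation*}
|a|^2-\tfrac{7}{4}(a+\bar a)+3=0,
\end{equation*}
which by completing the square is precisely $|a-7/4|^2=1/16$, i.e.\ the circle $|a-7/4|=1/4$. This establishes part (b).

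Finally, to separate the attractive and repulsive regions I would argue as in Proposition~\ref{-1b}: the map $a\mapsto |O_a'(1)|$ is continuous on $\com\setminus\{1\}$, and its zero at $a=5/3$ lies strictly inside the disk $|a-7/4|<1/4$ (since $|5/3-7/4|=1/12<1/4$). Hence $|O_a'(1)|<1$ throughout this disk by the minimum/maximum principle applied to the complement, and $|O_a'(1)|>1$ in the unbounded component of the complement of the circle. One small point to mention is that $a=1$ lies inside the disk but does not give a genuine fixed point, since $O_1(z)=-z^4$ and $z=1$ is mapped to $-1$; this is the expected removable situation and is explicitly excluded from the statement. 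The only genuinely computational step is the algebraic passage from $|5-3a|=|1-a|$ to $|a-7/4|=1/4$, which is completely routine, so I do not anticipate any real obstacle.
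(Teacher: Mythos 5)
Your proof is correct and follows essentially the same route as the paper, which simply declares this proof analogous to that of Proposition~\ref{-1b}: compute $O_a'(1)=(5-3a)/(1-a)$, identify the superattracting parameter $a=5/3$, reduce $|O_a'(1)|=1$ to the circle $\left|a-\tfrac{7}{4}\right|=\tfrac{1}{4}$, and separate the attractive and repulsive regions by continuity from the location of the zero. The only slip is in your closing remark: since $|1-\tfrac{7}{4}|=\tfrac{3}{4}>\tfrac{1}{4}$, the degenerate parameter $a=1$ lies \emph{outside} the disk, so the issue you raise does not arise inside the attracting region and is in fact consistent with repulsivity near $a=1$, where $|O_a'(1)|\to\infty$.
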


From the conjugacy $h(z)=-z$ between $O_a$ and $O_{-a}$ described in Lemma \ref{simetria}, we obtain the region of parameters for which $z=-1$ is attractive.

\begin{prop} \label{-1a}
The strange fixed point $z=-1$ satisfies the following statements.

\begin{enumerate}[a)]
  \item  The point $z=-1$ is attractive if
  $\left|a+\frac{7}{4}\right|<\frac{1}{4}$
  and it is a superattractor for $a=-\frac{5}{3}$.
  \item  If
   $\left|a+\frac{7}{4}\right|=\frac{1}{4}$,
 then $z=-1$ is an indifferent fixed point.
  \item The point $z=-1$ is repulsive  for any other value of $a\in\com$.
\end{enumerate}
\end{prop}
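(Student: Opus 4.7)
The plan is to exploit Lemma~\ref{simetria}, which asserts that the operators $O_a$ and $O_{-a}$ are conjugate via $h(z)=-z$. Since $h(-1)=1$, the strange fixed point $z=-1$ of $O_a$ corresponds under this conjugacy to the strange fixed point $z=1$ of $O_{-a}$. Conjugacies preserve multipliers of fixed points, so
\[
O_a'(-1) \;=\; O_{-a}'(1),
\]
and in particular $z=-1$ is attractive (resp.\ indifferent, repulsive, superattracting) for $O_a$ if and only if $z=1$ is attractive (resp.\ indifferent, repulsive, superattracting) for $O_{-a}$.

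With this observation in hand, I would simply translate the conclusions of Proposition~\ref{1a} by replacing $a$ with $-a$. Proposition~\ref{1a} tells us that $z=1$ is attractive for $O_{a'}$ precisely on the disk $|a'-\tfrac{7}{4}|<\tfrac{1}{4}$ and is superattracting at $a'=\tfrac{5}{3}$, with the boundary circle giving the indifferent case and the exterior giving repulsion. Substituting $a'=-a$, the disk $|-a-\tfrac{7}{4}|<\tfrac{1}{4}$ becomes $|a+\tfrac{7}{4}|<\tfrac{1}{4}$, and the superattractor parameter $-a=\tfrac{5}{3}$ becomes $a=-\tfrac{5}{3}$. The boundary and exterior transform identically, yielding statements (b) and (c).

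There is essentially no obstacle beyond carefully checking that the conjugacy indeed sends $-1$ to $1$ and that the multiplier identity holds; both are immediate from Lemma~\ref{simetria}. As a sanity check one could also verify directly: compute
\[
O_a'(z) = 4z^3\,\frac{z-a}{1-az} + z^4\,\frac{(1-az)+a(z-a)}{(1-az)^2},
\]
evaluate at $z=-1$ to get a rational expression in $a$, and observe that it coincides with the corresponding expression for $O_{-a}'(1)$ obtained in the proof of Proposition~\ref{1a}. This direct route would also handle (a)--(c) by the same stability analysis used for Proposition~\ref{-1b}, but routing through Lemma~\ref{simetria} is cleaner and avoids repeating the computation.
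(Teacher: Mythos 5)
Your proposal is correct and follows exactly the route the paper takes: the paper derives Proposition~\ref{-1a} from Proposition~\ref{1a} via the conjugacy $h(z)=-z$ of Lemma~\ref{simetria}, just as you do. The multiplier identity $O_a'(-1)=O_{-a}'(1)$ and the substitution $a\mapsto -a$ in the disk $\left|a-\tfrac{7}{4}\right|<\tfrac{1}{4}$ are precisely the intended argument.
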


For the other strange fixed points, we have the following result.

\begin{prop} \label{exa}
Let us write $a=\alpha + i \beta$ and $\nu(\alpha)=\sqrt{-5-\alpha^2+\sqrt{1+20\alpha^2}}$. The strange fixed points $z_{\pm}=\frac{a\pm\sqrt{a^2-4}}{2}$ satisfy the following statements.
\begin{enumerate}[a)]
  \item  The points $z_\pm$ are attractive if $\alpha\in(-\sqrt{6}, -2)\cup(2,\sqrt{6})$  and  $\beta\in (-\nu(\alpha), \nu(\alpha))$. They are  superattractors if $a=\pm \sqrt{5}$.
  \item   The points $z_\pm$ are indifferent fixed points if $\alpha\in[-\sqrt{6}, -2]\cup[2,\sqrt{6}]$  and  $\beta=\pm \nu(\alpha)$.
  \item The points  $z_{\pm}$ are repulsive for any other value of  $a\in\com$.
\end{enumerate}
\end{prop}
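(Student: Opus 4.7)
The plan is to mimic the proof of Proposition~\ref{-1b}: compute the multiplier $O_a'(z_\pm)$ explicitly, set $|O_a'(z_\pm)|=1$, and analyze the resulting real algebraic curve in the $(\alpha,\beta)$-plane.

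The first step is to differentiate $O_a$. A direct computation yields
\[
O_a'(z) \;=\; \frac{z^3\bigl(-4az^2 + (5+3a^2)z - 4a\bigr)}{(1-az)^2}.
\]
The crucial simplification comes from exploiting the fixed-point relation $z^2 - az + 1 = 0$, satisfied by both $z_\pm$. Using $z_\pm^2 = az_\pm - 1$ we get $1 - az_\pm = -z_\pm^2$, and the bracket $-4az_\pm^2 + (5+3a^2)z_\pm - 4a$ collapses to $(5-a^2)z_\pm$. Substituting these into the formula for $O_a'$ produces the clean identity
\[
O_a'(z_\pm) \;=\; 5 - a^2,
\]
the same value for both $z_+$ and $z_-$ (consistent with Lemma~\ref{conjugacion}, since $z_+ z_- = 1$).

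From this point the proof is elementary. Writing $a = \alpha + i\beta$, the equation $|5-a^2|^2 = 1$ expands, after a little algebra, to a biquadratic in $\beta$ that is naturally viewed as a quadratic in $u=\beta^2$:
\[
u^2 + (10 + 2\alpha^2)\,u + (\alpha^4 - 10\alpha^2 + 24) \;=\; 0.
\]
Its unique nonnegative root is $u = -(5+\alpha^2) + \sqrt{1+20\alpha^2}$, giving precisely $\beta = \pm \nu(\alpha)$. The requirement $u \geq 0$ simplifies to $(\alpha^2-4)(\alpha^2-6) \leq 0$, which pins down $\alpha \in [-\sqrt{6},-2]\cup[2,\sqrt{6}]$. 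The superattractor condition $5 - a^2 = 0$ gives $a = \pm\sqrt{5}$ at once, and both points lie strictly inside the two ovals just described; by continuity of $|5-a^2|$ on $\com$, the interior of the ovals must be the attractive region, the boundary the indifferent one, and the exterior the repulsive one.

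The only step requiring any real work is the reduction $O_a'(z_\pm) = 5-a^2$, and even that is a one-line manipulation once the fixed-point relation is invoked. The rest is bookkeeping entirely parallel to that of Proposition~\ref{-1b}, so I do not anticipate any genuine obstacle.
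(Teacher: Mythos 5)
Your proof is correct and follows exactly the route the paper intends: the paper gives no explicit proof of Proposition~\ref{exa}, stating only that it is analogous to Proposition~\ref{-1b}, and your argument is precisely that analogue, with the added (and verifiable) simplification $O_a'(z_\pm)=5-a^2$ obtained from the fixed-point relation $z^2-az+1=0$. All subsequent computations (the quadratic in $u=\beta^2$, the range of $\alpha$, and the superattractor values $a=\pm\sqrt{5}$) check out.
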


By undoing the change of  parameters, it is easy to check  that the regions of attraction of the strange fixed points obtained in Propositions \ref{1a}, \ref{-1a} and \ref{exa} correspond to the attraction regions obtained in Propositions \ref{-1b}, \ref{1b} and \ref{exb}. Let us highlight that the number of these regions has been halved.

\subsection{The antennas on the real line}

We study now the antennas  on the real line in the parameter planes. We analyse how the change of parameter explains the duplicity in the parameter plane of the original family and the existence of an infinite antenna for the operator $O_b$.

First, we prove that the operator $O_a$ (\ref{operador2}) maps the unit circle into itself if $a\in\real$.

\begin{lem}
If $a\in\real$, then the operator $O_a$ leaves  the unit circle $\mathbb{S}^1$ invariant.
\end{lem}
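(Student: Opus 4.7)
The plan is to verify directly that $|O_a(z)|=1$ whenever $|z|=1$, using either a short direct computation or the symmetry relation from Lemma~\ref{conjugacion}.

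My first approach would be the elementary calculation. Assume $|z|=1$, so $\bar z = 1/z$ and $|z^4|=1$. It suffices to show $\bigl|\tfrac{z-a}{1-az}\bigr|=1$, i.e.\ $|z-a|=|1-az|$. Expanding modulus squared and using $a\in\real$,
\[
|z-a|^2 = (z-a)(\bar z-a) = 1 - a(z+\bar z) + a^2,
\]
\[
|1-az|^2 = (1-az)(1-a\bar z) = 1 - a(z+\bar z) + a^2,
\]
which coincide. This is really the observation that $\frac{z-a}{1-az}$ is the standard Blaschke-type factor, which is unitary on $\mathbb{S}^1$ precisely when $a$ is real (or more generally for any $a$ with $\bar a$ replacing $a$ in the denominator).

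A cleaner conceptual alternative, which I would probably include as a remark, uses Lemma~\ref{conjugacion}: since $a\in\real$, the operator $O_a$ has real coefficients, so $\overline{O_a(z)}=O_a(\bar z)$. For $|z|=1$ we have $\bar z = 1/z = I(z)$, and Lemma~\ref{conjugacion} gives $O_a(I(z))=I(O_a(z))=1/O_a(z)$. Chaining these identities yields
\[
\overline{O_a(z)} \;=\; O_a(\bar z) \;=\; O_a(I(z)) \;=\; I(O_a(z)) \;=\; \frac{1}{O_a(z)},
\]
so $|O_a(z)|^2 = O_a(z)\,\overline{O_a(z)} = 1$.

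There is no real obstacle here; the main thing is just to notice that the factor $\tfrac{z-a}{1-az}$ is a Blaschke factor for real $a$. I would present the direct modulus computation as the official proof, since it is one line and does not require invoking the previous lemma.
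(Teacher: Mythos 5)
Your proof is correct and is essentially the same as the paper's: both verify directly that the Blaschke-type factor $\frac{z-a}{1-az}$ has modulus one on $\mathbb{S}^1$ when $a\in\mathbb{R}$, the paper by factoring $e^{i\theta}$ from the numerator so that numerator and denominator become complex conjugates, and you by expanding $|z-a|^2$ and $|1-az|^2$ and seeing they coincide. The alternative argument via Lemma~\ref{conjugacion} is a nice additional observation but not needed.
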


\begin{proof}
\;Let $z=e^{i\theta}\in \mathbb{S}^1$ be a point of the unit circle. Then,
\[
O_{a}\left( e^{i\theta}\right) =e^{4i\theta} \frac{e^{i\theta}-a}{1-a e^{i\theta}}=e^{4i\theta} \frac{e^{i\theta}(1-a e^{-i\theta})}{1-a e^{i\theta}}.
\]
Using that $1-a e^{i\theta}$ and $1-a e^{-i\theta}$ are complex conjugate if $a\in\real$, we conclude that $O_{a}\left( e^{i\theta}\right)$ is also in the unit circle:
\[
|O_{a}\left( e^{i\theta}\right)| =\left|e^{4i\theta}\right|\left|e^{i\theta}\right|\left| \frac{1-a e^{-i\theta}}{1-a e^{i\theta}}\right|=1.
\]
 \hfill $\square$
\end{proof}

\begin{prop} \label{PropAntena}
There is an infinite antenna on the real line in the parameter plane of operator $O_b(z)$, that corresponds to a finite antenna located in the interval $\left( -\frac{5}{3},-1\right)$  in the parameter plane of operator $O_a(z)$.
\end{prop}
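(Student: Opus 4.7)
The plan is twofold: first, I would show that for every real $a \in (-5/3, -1)$ both free critical points of $O_a$ lie on the unit circle, so that by the preceding lemma their forward orbits remain on $\mathbb{S}^1$ and hence cannot accumulate at $0$ or $\infty$; second, I would transport this interval back to the $b$-plane through the change of parameters \eqref{cambio} and exhibit an unbounded real preimage.

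For the first step, consider the discriminant appearing in \eqref{ecCriticos+-}, which factors as $(a^2-1)(9a^2-25)$. For $a \in (-5/3, -1)$ one has $a^2 \in (1, 25/9)$, so the first factor is positive, the second is negative, and the discriminant is strictly negative. Consequently, $c_+$ and $c_-$ form a complex conjugate pair. A direct expansion of \eqref{ecCriticos+-} (or equivalently the observation that the involution $I(z)=1/z$ commutes with $O_a$ by Lemma \ref{conjugacion} and therefore permutes the two free critical points) gives the identity $c_+ c_- = 1$. Hence $|c_+|^2 = c_+ \overline{c_+} = c_+ c_- = 1$, so both critical points lie on $\mathbb{S}^1$. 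The preceding lemma then implies that the orbits of $c_\pm$ remain on $\mathbb{S}^1$ for every iterate, so they never converge to $0$ or $\infty$. Thus the whole real interval $(-5/3, -1)$ belongs to the antenna of $O_a$.

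For the second step, a direct calculation gives
$$a'(b) = \frac{-8(b^2 + 2b + 5)}{(b^2 + 2b - 3)^2},$$
and since $b^2 + 2b + 5 = (b+1)^2 + 4 > 0$, the function $a(b)$ is strictly decreasing on each of the three real intervals $(-\infty, -3)$, $(-3, 1)$, $(1, +\infty)$. An analysis of the limits at the endpoints shows that $a$ maps $(-\infty, -3)$ bijectively onto $(-\infty, -1)$, with $a(b) \to -1$ as $b \to -\infty$ and $a(b) \to -\infty$ as $b \to -3^-$. Solving $a(b) = -5/3$ reduces to $b^2 + 14b + 9 = 0$, whose root in $(-\infty, -3)$ is $b = -7 - 2\sqrt{10}$; by monotonicity, the preimage of $(-5/3, -1)$ in this branch is the unbounded interval $(-\infty, -7 - 2\sqrt{10})$. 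Pulling back via \eqref{cambio}, this half-line belongs to the antenna of $O_b$ on the real axis, proving the proposition.

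I do not expect a genuine obstacle: the argument reduces to an algebraic sign check in the discriminant of \eqref{ecCriticos+-} and a routine monotonicity analysis of the degree-two map $b \mapsto a(b)$. The only bookkeeping point to watch is that this map pulls back $(-5/3, -1)$ to two real components, and one must identify $(-\infty, -7 - 2\sqrt{10})$ as the unbounded one, while the companion component $(-1, -7 + 2\sqrt{10})$ lying in $(-3, 1)$ remains bounded.
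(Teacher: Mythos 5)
Your proposal is correct and follows essentially the same route as the paper: the sign of the discriminant $(a^2-1)(9a^2-25)$ together with $c_+c_-=1$ places both free critical points on $\mathbb{S}^1$, the invariance lemma traps their orbits there, and the change of parameters \eqref{cambio} pulls $\left(-\frac{5}{3},-1\right)$ back to the unbounded interval $\left(-\infty,-7-2\sqrt{10}\right)$ plus the bounded companion $\left(-1,-7+2\sqrt{10}\right)$. Your explicit computation of $a'(b)$ and the monotonicity analysis merely make precise the paper's brief remark that $a\to -1$ as $b\to-\infty$.
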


\begin{proof}
\;For $a$ real, the critical points
\[
c_{\pm }=\frac{5+3a^{2}\pm \sqrt{\left( a^{2}-1\right) \left(9a^{2}-25\right) }}{8a}
\]
of $O_a$ are real if $a\in \left( -\infty ,-\frac{5}{3}\right) \cup \left(-1,1\right) \cup \left( \frac{5}{3},\infty \right).$
For $a\in \left( -\frac{5}{3},-1\right) \cup \left( 1,\frac{5}{3}\right) $
these critical points are complex conjugate. Using that they also satisfy $c_+=1/c_-$ we conclude that they are in the unit circle:
\[
c_{+}=\bar{c}_{-}=\frac{1}{c_{-}}\Rightarrow \left\vert c_{+}\right\vert =\left\vert
c_{-}\right\vert =1.
\]

As $O_{a}$ leaves the unit circle invariant for $a\in\real$,  the orbits of the critical points remain in the unit circle for $a\in \left( -\frac{
5}{3},-1\right) \cup \left( 1,\frac{5}{3}\right) $. This fact explains the existence of bounded antennas of parameters for which the critical points cannot be in the basins of attraction of $0$ nor $\infty$ for the operator $O_a$ (see Figure~\ref{planonuevo44}).
Recall that for $a=\pm 1$, the operator  $O_a$ degenerates to a degree 4 map. On the other hand,  for $a=\frac{5}{3}$ (resp.\ $a=-\frac{5}{3}$) the strange fixed point $z=1$ (resp.\ $z=-1$) is superattractive.

We can use the relation between the parameter $a$ and the parameter $b$ given by
 Equation \eqref{cambio} (Figure~\ref{cambioVariable} illustrates this relation for real parameters) to obtain the analogous antennas in the parameter space of $O_{b}$. On the one hand, the parameters $a\in \left( 1,\frac{5}{3}\right) $ correspond to $b\in \left( -2,1-2\sqrt{2}\right) \cup \left( 3,1+2\sqrt{2}\right).$ On the other hand, the parameters $a\in \left( -\frac{5}{3},-1\right) $ correspond to $b\in \left( -\infty ,-7-2\sqrt{10}\right) \cup \left( -1,-7+2\sqrt{10}\right).$ The unbounded interval is obtained from the
fact that if   $b\rightarrow -\infty$  then $ a\rightarrow -1  $. \hfill $\square$
\end{proof}
\vspace{-0.3cm}
\begin{figure}[h!!!]
\centering
  \includegraphics[width=170pt]{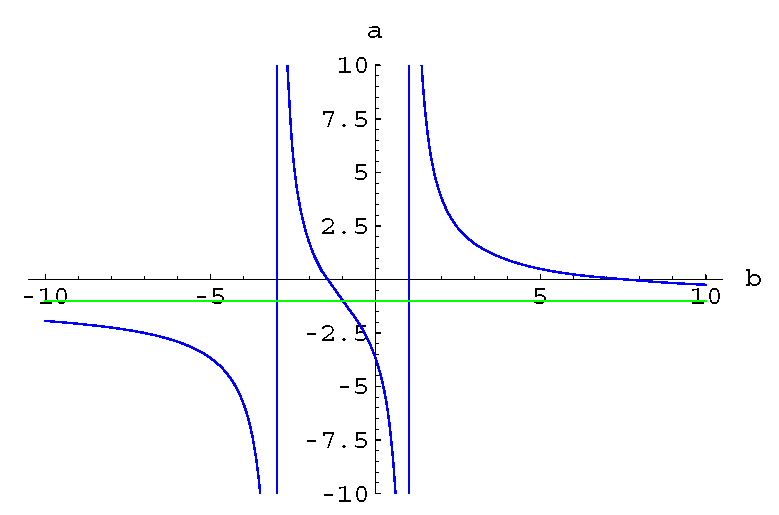}\\
  \caption{Relation between the parameters $a$ and $b$ for $b$ real.}\label{cambioVariable}
\end{figure}


\section{Dynamical study of the generalised family}\label{generalizedoperators}

 In this section we study a generalization of the family of operators $O_a$. More specifically, we consider the family of operators $O_{a,n,k}$ given by

\begin{equation} \label{oper_n+k}
O_{a,n,k}\left( z\right) =z^{n} \left( \frac{ z-a }{
1-az}\right) ^{k},
\end{equation}

\noindent where $a\in\com$, $n\geq 2$ and $k\geq 1$. We study this generalised family given that it can be obtained, for different parameters $n$ and $k$, from several root-finding families of numerical methods.
The operator $O_{a,4,1}$ coincides with the map $O_a$, which we studied in the previous section. Notice that this case was studied first in \cite{aa}. Also, the operator of the
Chebyshev-Halley family of numerical methods acting on the polynomials $z^{2}+c$ can be transformed into $O_{a,n,k}$ for $n=3$ and $k=1$ (see \cite{famCheby}). Another example of appearance of the family $O_{a,n,k}$ is obtained \cite{aa6+2}, where the case $n=6$ and $k=2$ is studied .

We start de dynamical study of the operator $O_{a,n,k}$ by analysing the fixed points which are superattractive independently of the parameter. From the term $z^n$ of the operator $O_{a,n,k}$ and the fact that $(z-a)/(1-az)\neq 1/z$ for all $a\in\com$, we conclude that the points $z=0$ and $z=\infty$ are superattractive fixed points of local degree, at least, $n$.  Actually, the idea behind the generalised operator  $O_{a,n,k}$ is that, when obtained from a numerical method, the points $z=0$ and $z=\infty$ would correspond to the solutions of a quadratic equation. In that case, the method would have order of convergence $n$ to the solutions.

In order to be able to draw the parameter planes of the family $O_{a,n,k}$ it is important to know the expressions of the critical points, which are the zeros of
\begin{equation}
O_{a,n,k}^{\prime }\left( z\right) =z^{n-1}\frac{\left( z-a\right)
^{k-1}\left( -anz^{2}+((n+k)+a^{2}(n-k))z-an\right) }{(1-az)^{k+1}}.
\label{derop_n+k }
\end{equation}

As all degree $n+k$  rational maps, the operator $O_{a,n,k}$ has $2(n+k)-2$ critical points. Since the points $z=0$ and $z=\infty$ are superattracting fixed points of local degree $n$, they are critical points of multiplicity $n-1$. The points $z=a$ and $z=1/a$ are mapped with degree $k$ to $z=0$ and $z=\infty$, respectively. Hence, they are critical points of multiplicity $k-1$ (they are not critical points if $k=1$).  Up to now we have counted $2(n+k) -4$ critical points. To find the remaining 2 critical points we have to look for points at which $O_{a,n,k}^{\prime }$ vanishes. We obtain two critical points $c_{\pm}$ given by
\begin{equation*}
c_{\pm }=\frac{(n+k)+(n-k)a^{2}\pm \sqrt{(a^{2}-1)((n-k)^{2}a^{2}-(n+k)^{2})}%
}{2na}.
\end{equation*}

 We call $c_{\pm}$ \emph{free critical points} since they are the only critical points whose dynamics may vary. Indeed, $z=0$ and $z=\infty$ are superattractive fixed points (they have their own basins of attraction) while $z=a$ and $z=1/a$ are preimages of $z=0$ and $z=\infty$, respectively.

\begin{lem}
\label{conjugacionN} Let $I(z)=1/z$. Then, fixed any $a\in\mathbb{C}$ and $
z\in\widehat{\mathbb{C}}$, $z\neq 0$, we have that
$$
O_{a,n,k}\left( z\right) \circ I(z)=I\circ O_{a,n,k}\left( z\right).
$$
\end{lem}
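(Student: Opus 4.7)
The plan is to verify the identity by a direct algebraic computation, mirroring the proof of Lemma~\ref{conjugacion} (which handled the special case $n=4$, $k=1$). The key observation is that the ``bracket'' factor $(z-a)/(1-az)$ already satisfies the inversion relation on its own, and raising it to the $k$-th power preserves this property, so the proof should reduce to a short algebraic manipulation regardless of the values of $n$ and $k$.

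Concretely, I would substitute $z \mapsto 1/z$ into the defining formula \eqref{oper_n+k}. The prefactor $z^n$ becomes $1/z^n$. For the bracketed factor, multiplying numerator and denominator by $z$ gives $(1/z - a)/(1 - a/z) = (1-az)/(z-a)$, which is precisely the reciprocal of $(z-a)/(1-az)$. Taking the $k$-th power, the bracket in $O_{a,n,k}(1/z)$ equals the reciprocal of the bracket appearing in $O_{a,n,k}(z)$. Combining the two reciprocations yields $O_{a,n,k}(1/z) = 1/\bigl(z^n ((z-a)/(1-az))^k\bigr) = 1/O_{a,n,k}(z) = I(O_{a,n,k}(z))$, which is exactly the claim.

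There is no substantial obstacle here: the identity follows from the multiplicative structure of $O_{a,n,k}$, because both $z^n$ and the $k$-th power of the M\"obius-type factor $(z-a)/(1-az)$ intertwine with $I$ in the same way, so the exponents $n$ and $k$ play no role beyond being integers. The only points requiring a moment of care are those at which $1/z$ or the intermediate expressions might fail to be defined, namely $z=0$, $z=a$, $z=1/a$, and $z=\infty$; by continuity of rational maps on $\widehat{\mathbb{C}}$, the identity extends automatically to all of $\widehat{\mathbb{C}}$ once it is verified on the open dense subset where every expression makes sense, which is why the statement restricts attention to $z\neq 0$.
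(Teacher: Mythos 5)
Your computation is correct and is exactly the argument the paper intends: the paper omits the proof of Lemma~\ref{conjugacionN} precisely because it is the verbatim generalization of the displayed computation for Lemma~\ref{conjugacion} (multiply the M\"obius factor by $z/z$ so that it inverts, and note that the powers $n$ and $k$ pass through the reciprocal). Nothing further is needed.
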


By Lemma \ref{conjugacionN}, since $c_+=1/c_-$, we can relate the orbits of $c_+$ and $c_-$. Indeed, we have that $O_ {a,n,k}^m(c_+)=1/O_ {a,n,k}^m(c_-)$ for all $m>0$. In particular, if one critical orbit
converges to $z=0$ then the other one converges to $z=\infty$. This implies
that it is enough to analyse the asymptotic behaviour of one of the critical
orbits to study the existence of
any attractor other than the basins of attraction of $0$ and $\infty$.

In Figure~\ref{pparpar} we show the parameter planes of the operator $O_{a,n,k}$ for different values of $n$ and $k$.  These drawings are done by iterating a critical point as explained in Section \ref{SecPlanoParam}. We observe in black the regions of parameters for which the critical points do not converge to the roots. In Figure~\ref{pparpar} (a) and (b) we can observe how, when $|n-k|\leq 1$, we obtain unbounded black regions. These unbounded regions can be understood by analysing the stability of the points $z=1$ and $z=-1$.
The point $z=1$ is a fixed point for all values of the parameters. Moreover, $O_{a,n,k}\left(
-1\right) =\left( -1\right) ^{n+k}$, so $z=-1$ is a fixed point if $n+k$
is odd  and a pre-fixed point if $n+k$ is even. In the following lemmas we study the stability of the points $z=1$ and $z=-1$. The proofs are analogous to the one of Proposition~\ref{-1b}.

\begin{figure}[hbt!]
\centering
\begin{minipage}[b]{0.45\linewidth}{
    \begin{tikzpicture}
    \begin{axis}[width=150pt, axis equal image, scale only axis,  enlargelimits=false, axis on top, 
    ]
      \addplot graphics[xmin=-4,xmax=6,ymin=-5,ymax=5] {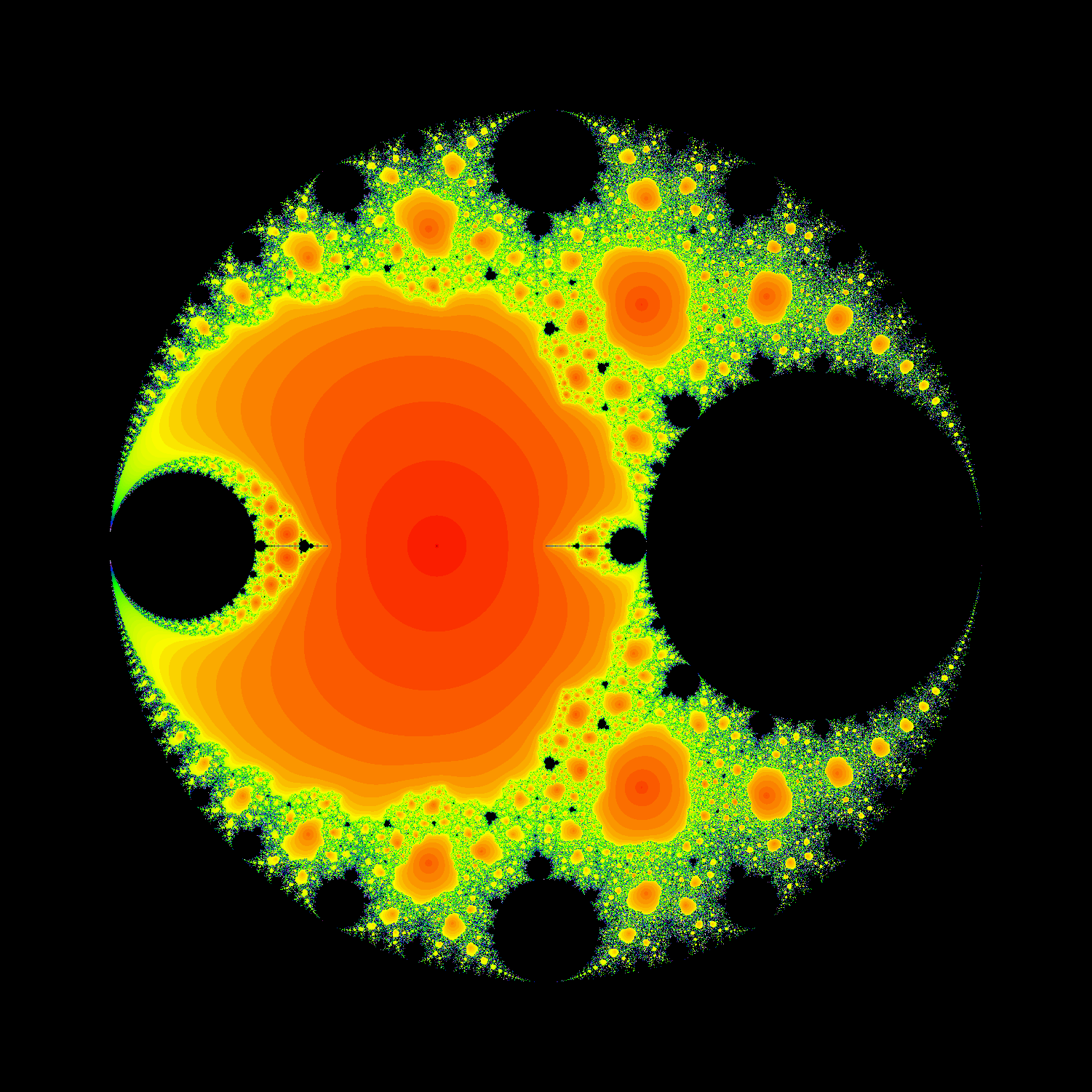};
    \end{axis}
  \end{tikzpicture}}
 \centerline{(a) \small{$n=2,k=2$}}
\end{minipage}
  \quad
  \vspace{0.5cm}
\begin{minipage}[b]{0.45\linewidth}{
    \begin{tikzpicture}
    \begin{axis}[width=150pt, axis equal image, scale only axis,  enlargelimits=false, axis on top, 
     ]
      \addplot graphics[xmin=-6,xmax=6,ymin=-6,ymax=6] {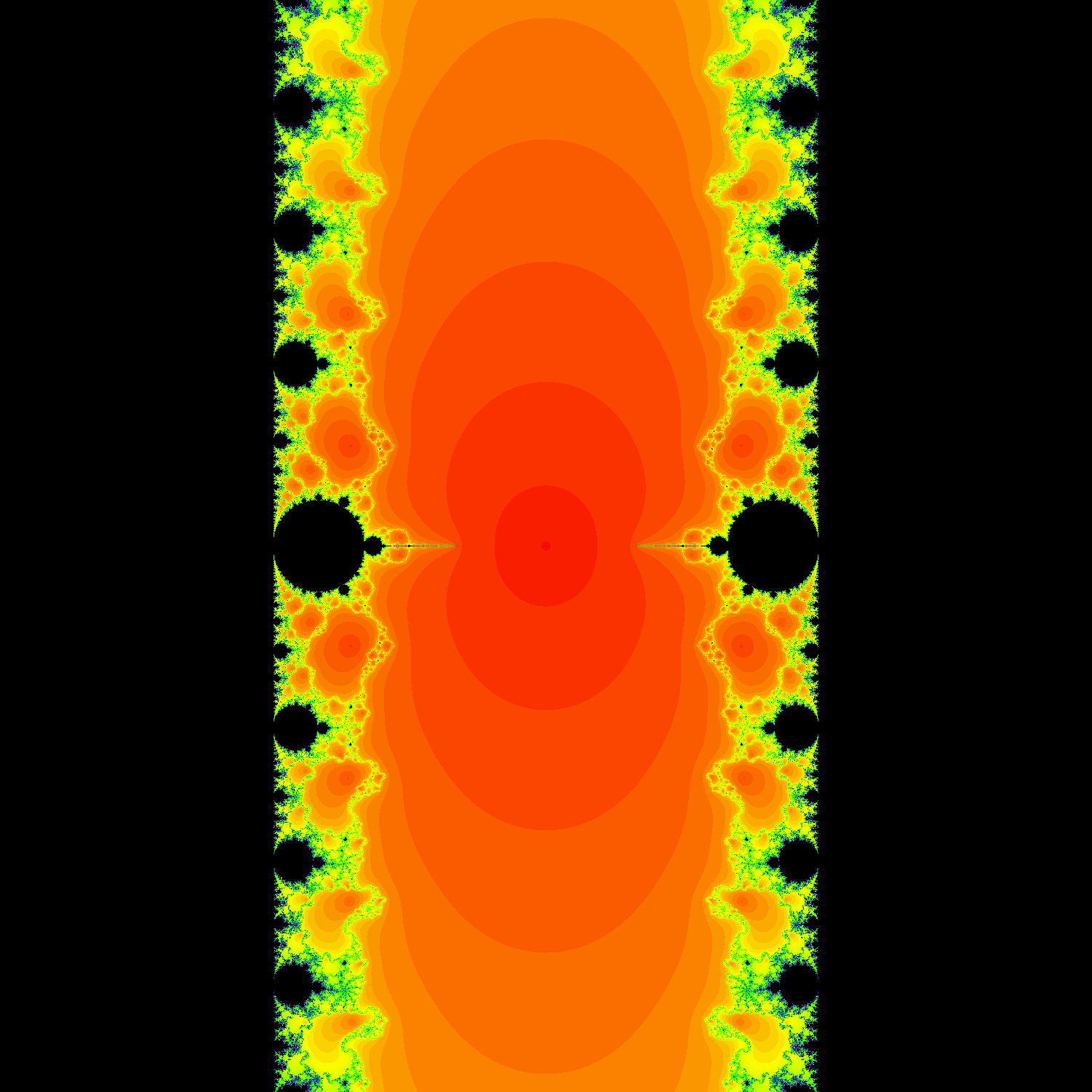};
    \end{axis}
  \end{tikzpicture}}
 \centerline{(b) \small{$n=3, k=2$} }
\end{minipage}

\begin{minipage}[b]{0.45\linewidth}{
    	\begin{tikzpicture}
    		\begin{axis}[width=150pt, axis equal image, scale only axis,  enlargelimits=false, axis on top, 
    		]
      			\addplot graphics[xmin=-5,xmax=21,ymin=-13,ymax=13] {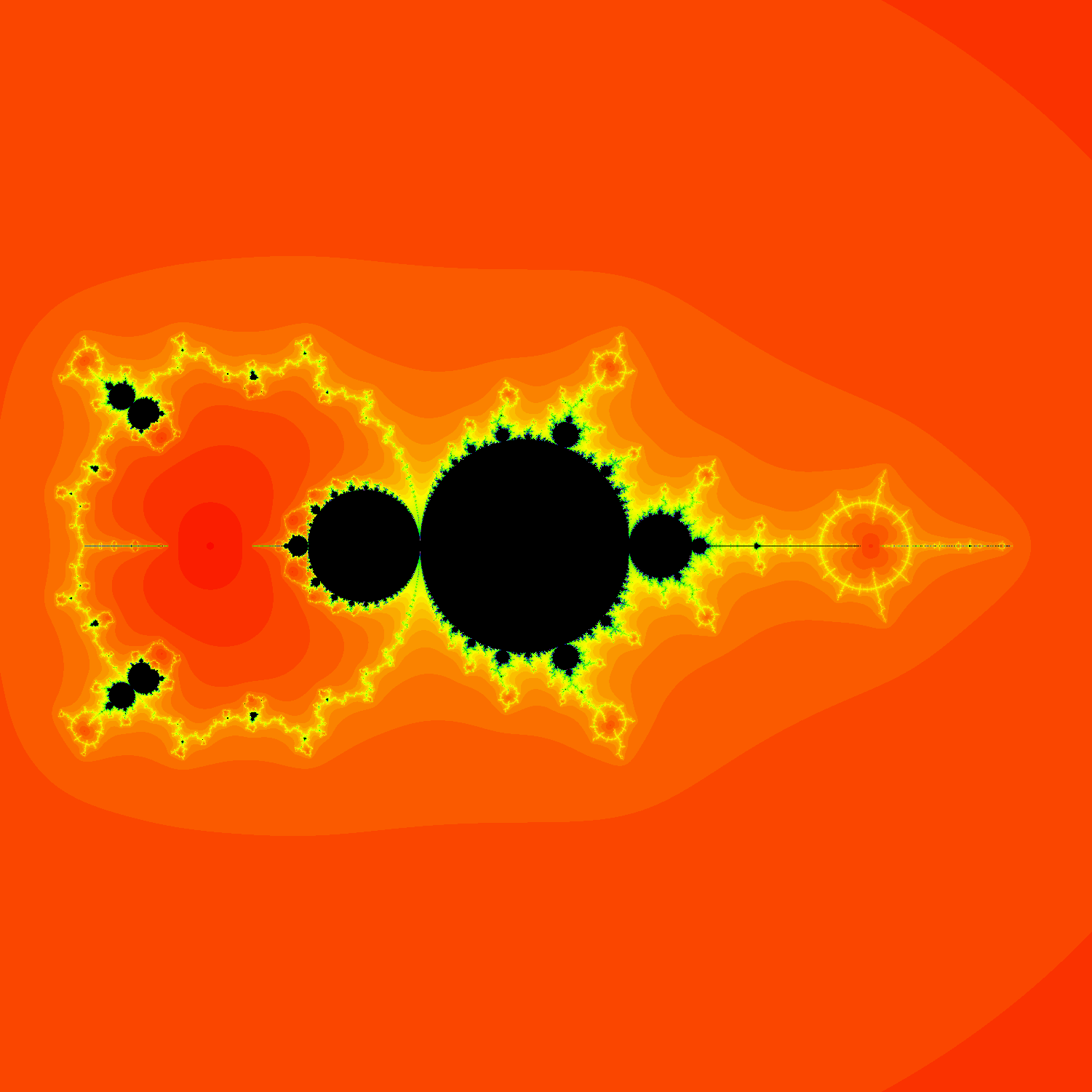};
    		\end{axis}
  		\end{tikzpicture}	}
 \centerline{(c) \small{$n=4,k=2$}}
\end{minipage}
  \quad
  \vspace{0.5cm}
\begin{minipage}[b]{0.45\linewidth}{
    \begin{tikzpicture}
    \begin{axis}[width=150pt, axis equal image, scale only axis,  enlargelimits=false, axis on top,
     ]
      \addplot graphics[xmin=-4,xmax=4,ymin=-4,ymax=4] {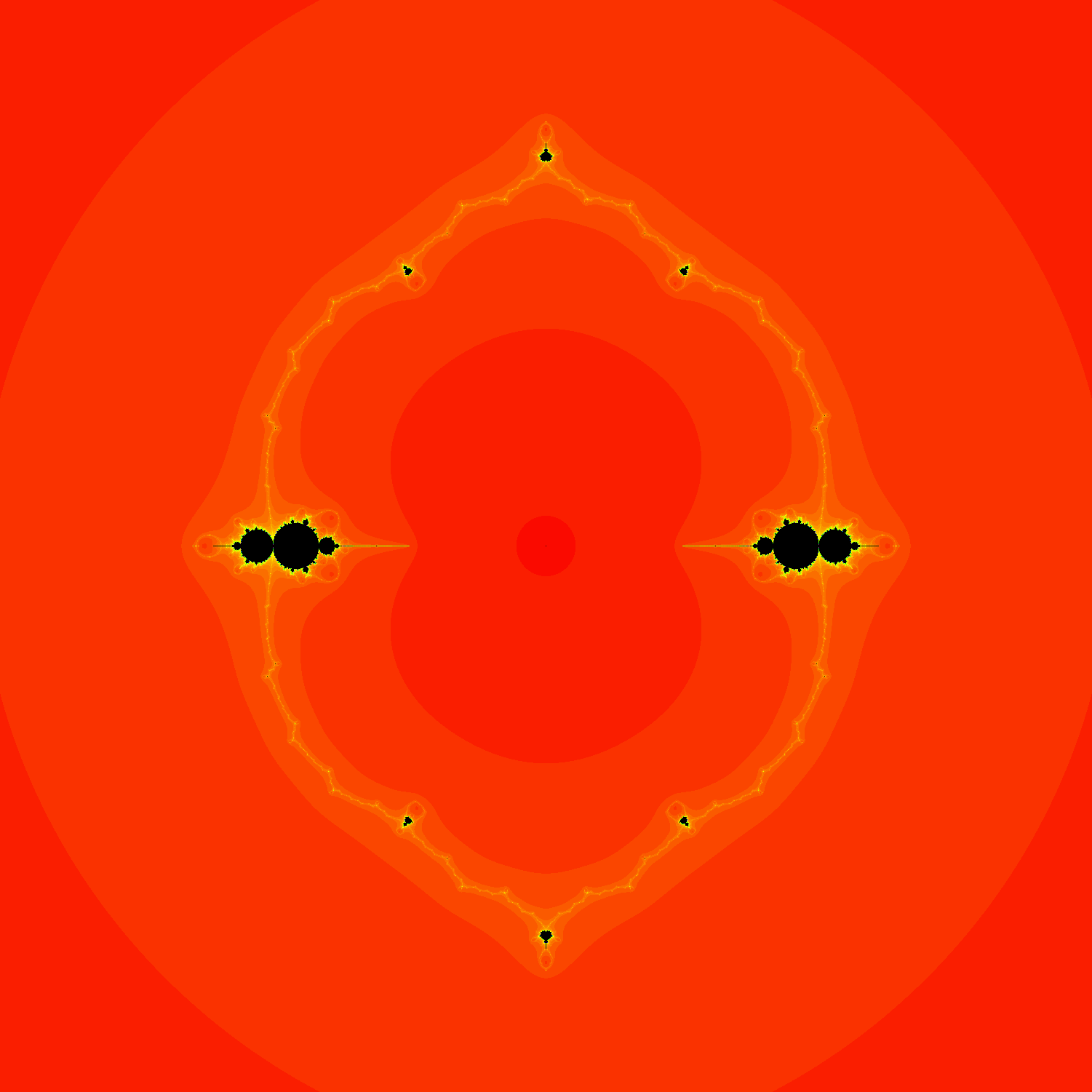};
    \end{axis}
  \end{tikzpicture}}
   \centerline{(d) \small{$n=7, k=2$}}
\end{minipage}

   \begin{minipage}[b]{0.45\linewidth}{
    	\begin{tikzpicture}
    		\begin{axis}[width=150pt, axis equal image, scale only axis,  enlargelimits=false, axis on top,
    		]
      			\addplot graphics[xmin=-70,xmax=70,ymin=-70,ymax=70] {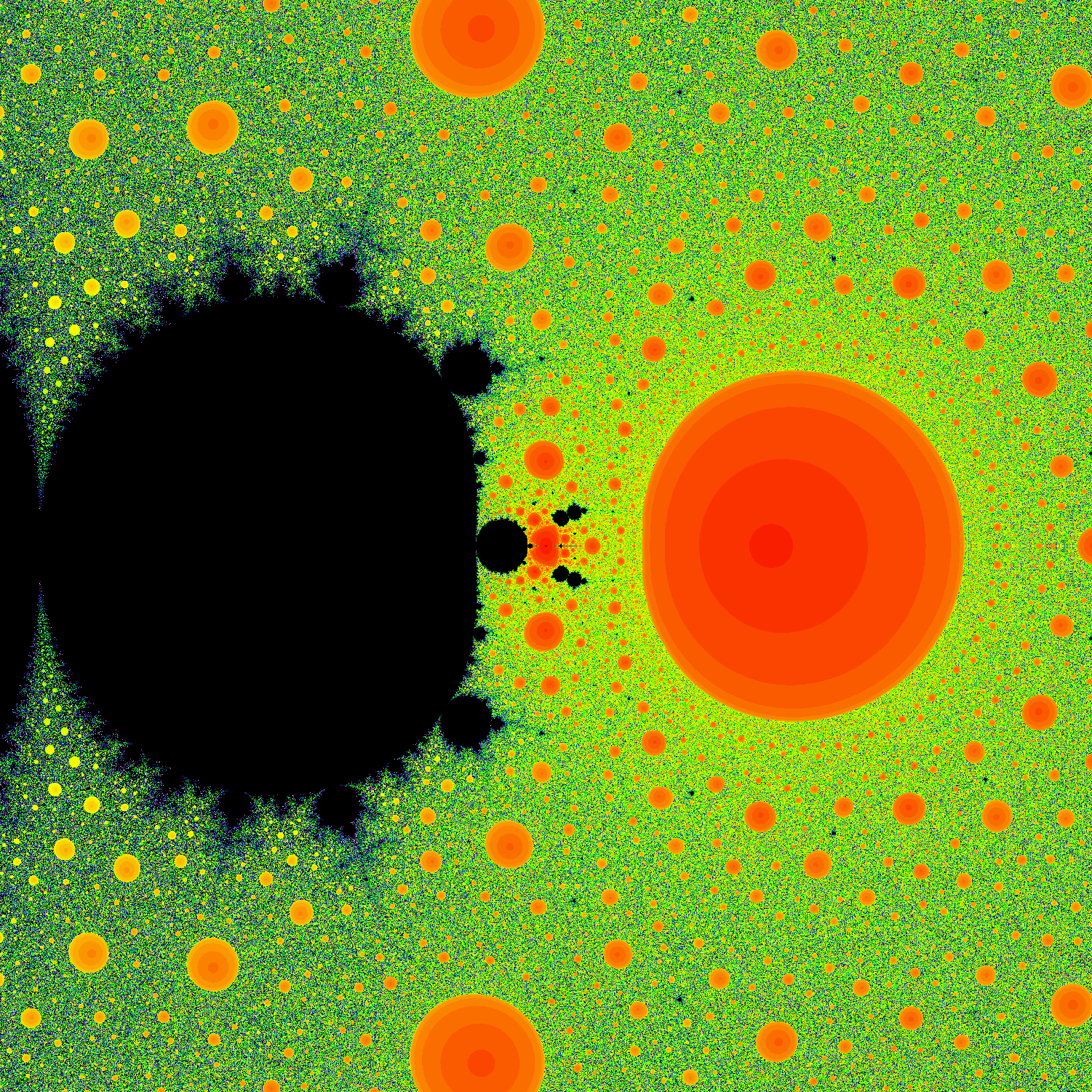};
    		\end{axis}
  		\end{tikzpicture}}
   \centerline{(e) \small{$n=3,k=5$}}	
  \end{minipage}
  \quad
\begin{minipage}[b]{0.45\linewidth}{
    \begin{tikzpicture}
    \begin{axis}[width=150pt, axis equal image, scale only axis,  enlargelimits=false, axis on top,
     ]
      \addplot graphics[xmin=-30,xmax=30,ymin=-30,ymax=30] {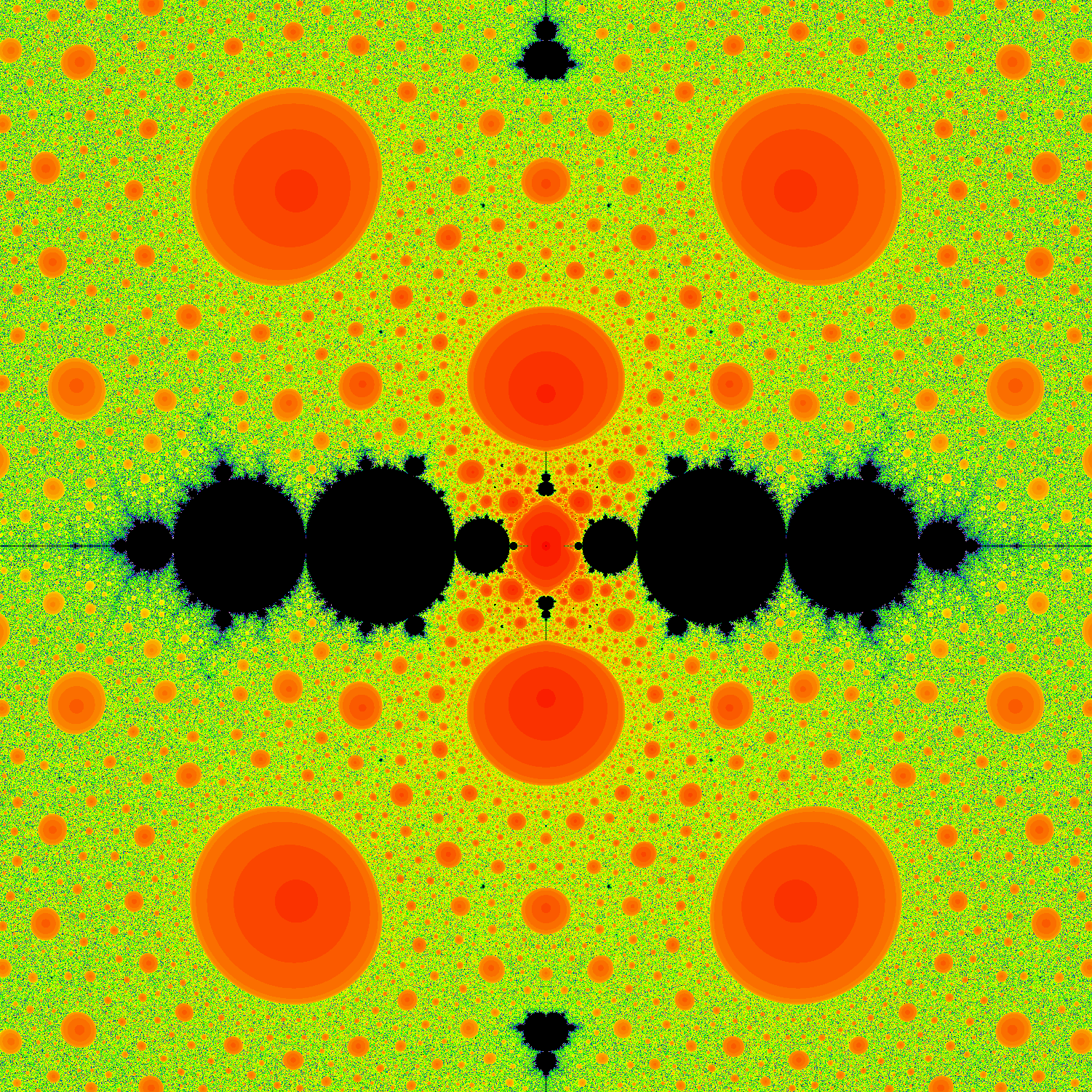};
    \end{axis}
  \end{tikzpicture}}
   \centerline{(f) \small{$n=3, k=6$}}
    \end{minipage}

\caption{{\protect\small Parameter planes of $O_{a,n,k}(z)$ for several values of $n$ and $k$.}}
\label{pparpar}
\end{figure}

\begin{prop}\label{propz1}
For $\left\vert n-k\right\vert \neq 0,1$, the   fixed point $z=1$
satisfies the following statements.
\begin{enumerate}[a)]
\item The point $z=1$ is attractive if $\left\vert a-\frac{n^{2}-k^{2}-1}{%
\left( n-k\right) ^{2}-1}\right\vert <\frac{2k}{\left( n-k\right) ^{2}-1}$
and it is a superattractor for $a=\frac{n+k}{n-k}$.
\item If $\left\vert a-\frac{n^{2}-k^{2}-1}{\left( n-k\right) ^{2}-1}
\right\vert =\frac{2k}{\left( n-k\right) ^{2}-1}$ then $z=1$ is an
indifferent fixed point.
\item The point $z=1$ is  repulsive for any other value of $a\in\com$.
\end{enumerate}
\end{prop}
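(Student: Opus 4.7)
The plan is to follow the exact strategy used in Proposition \ref{-1b}: compute the multiplier $O'_{a,n,k}(1)$ explicitly, analyse the curve $|O'_{a,n,k}(1)|=1$ in the parameter plane, and then use the location of the superattractor to determine which side of the curve is attractive.

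First I would evaluate the formula for $O'_{a,n,k}$ at $z=1$, obtaining
\[
O'_{a,n,k}(1) = \frac{(n-k)a^2 - 2na + (n+k)}{(1-a)^2}.
\]
Since $a=1$ is a root of the numerator, it factors as $(a-1)\bigl[(n-k)a - (n+k)\bigr]$, and cancelling one factor of $a-1$ with the denominator gives the much cleaner expression
\[
O'_{a,n,k}(1) = \frac{(n+k) - (n-k)a}{1-a}.
\]
This immediately produces the superattractor in part (a): $O'_{a,n,k}(1)=0$ precisely when $a = \frac{n+k}{n-k}$ (the other root $a=1$ being excluded as $O_{1,n,k}(z) = (-1)^k z^n$ is degenerate).

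Next I would square $|O'_{a,n,k}(1)| = 1$ and expand both sides using $|w|^2 = w\bar{w}$. Grouping the resulting real equation by $|a|^2$ and $\re(a)$ yields
\[
\bigl[(n-k)^2 - 1\bigr]\,|a|^2 - 2(n^2-k^2-1)\,\re(a) + (n+k)^2 - 1 = 0.
\]
The hypothesis $|n-k| \neq 0,1$ forces the leading coefficient $(n-k)^2 - 1$ to be strictly positive, so dividing through and completing the square produces a genuine circle. The key algebraic simplification is the identity $(uv-1)^2 - (u^2-1)(v^2-1) = (u-v)^2$ applied with $u=n+k$ and $v=n-k$, which collapses the discriminant to $(2k)^2$ and reproduces exactly the disk $\left|a - \tfrac{n^2-k^2-1}{(n-k)^2-1}\right| < \tfrac{2k}{(n-k)^2-1}$ of the statement.

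To conclude, I would verify that the superattractor lies inside this disk rather than outside it: a direct computation shows that $\frac{n+k}{n-k} - \frac{n^2-k^2-1}{(n-k)^2-1} = \frac{-2k}{(n-k)\bigl((n-k)^2-1\bigr)}$, whose modulus equals the radius divided by $|n-k|$, and hence is strictly less than the radius whenever $|n-k|>1$. By continuity of $a \mapsto O'_{a,n,k}(1)$, this forces $|O'_{a,n,k}(1)| < 1$ on the interior of the disk and $|O'_{a,n,k}(1)| > 1$ on the exterior, settling all three cases. I expect the main obstacle to be the algebraic manipulation rather than any conceptual difficulty: specifically, spotting that $a=1$ is a root of the numerator (which is what permits the simplification to a Möbius expression) and recognising the symmetric identity $(uv-1)^2 - (u^2-1)(v^2-1) = (u-v)^2$ that makes the radius come out as $2k/((n-k)^2-1)$. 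Once these two steps are in hand, the rest of the argument is routine.
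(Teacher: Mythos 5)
Your proposal is correct and follows exactly the strategy the paper intends: the paper gives no separate proof of this proposition, stating only that it is ``analogous to the one of Proposition~\ref{-1b}'' (compute the multiplier, identify the curve $|O'_{a,n,k}(1)|=1$, and locate the superattractor inside it), and your computations --- the factorisation of the numerator by $(a-1)$, the circle equation $((n-k)^2-1)|a|^2-2(n^2-k^2-1)\,{\rm Re}(a)+(n+k)^2-1=0$, the identity collapsing the radius to $2k/((n-k)^2-1)$, and the verification that $\frac{n+k}{n-k}$ lies inside the disk --- all check out.
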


For $n+k$ odd, the point $z=-1$ is a fixed point. In this case, the set of
parameters where it is attractive can be directly obtained from the following lemma, which provides a symmetry of the parameter plane for $n+k$ odd (compare with Figure \ref{pparpar}).

\begin{lem}
\label{simetrian} Let $h(z)=-z$ and assume that is $n+k$ odd. Then, for $a\in
\mathbb{C}$ and $z\in \widehat{\mathbb{C}}$ we have
$$
h^{-1}\circ O_{-a,n,k}\circ h=O_{a,n,k}.
$$
\end{lem}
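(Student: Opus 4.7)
The plan is to prove the identity by direct substitution, exactly in the spirit of the proof of Lemma~\ref{simetria}. The only new ingredient compared to that earlier calculation is keeping careful track of two separate sign factors, one coming from the $z^n$ piece and one from the $k$-th power, and then using the parity hypothesis to collapse them.

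First I would unfold the composition. Since $h=h^{-1}$, it suffices to show that $O_{-a,n,k}(-z) = -O_{a,n,k}(z)$. Substituting $-z$ into the definition \eqref{oper_n+k} with parameter $-a$ gives
\[
O_{-a,n,k}(-z)=(-z)^{n}\left(\frac{-z-(-a)}{1-(-a)(-z)}\right)^{\!k}=(-1)^{n} z^{n}\left(\frac{-(z-a)}{1-az}\right)^{\!k}=(-1)^{n+k}\,z^{n}\left(\frac{z-a}{1-az}\right)^{\!k}.
\]
The right hand side is $(-1)^{n+k}O_{a,n,k}(z)$.

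Next I would invoke the parity assumption. Since $n+k$ is odd, $(-1)^{n+k}=-1$, so the previous display becomes $O_{-a,n,k}(-z)=-O_{a,n,k}(z)$. Applying $h^{-1}(w)=-w$ to both sides yields
\[
h^{-1}\circ O_{-a,n,k}\circ h(z)=-\bigl(-O_{a,n,k}(z)\bigr)=O_{a,n,k}(z),
\]
which is the asserted identity.

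There is essentially no obstacle here; the only point that requires attention is to separate the two minus signs produced by the substitution (one from $(-z)^n$, one from negating the numerator $z-a$ inside the $k$-th power) and to combine them into $(-1)^{n+k}$ before using the parity hypothesis. Without the assumption that $n+k$ is odd the computation still goes through and produces the cleaner identity $O_{-a,n,k}\circ h=(-1)^{n+k}\,O_{a,n,k}$, which explains why the symmetry $a\mapsto -a$ of the parameter plane is only visible when $n+k$ is odd (cf.\ Figure~\ref{pparpar}).
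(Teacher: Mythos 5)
Your proof is correct and follows the same direct-substitution computation that the paper uses for the special case $n=4$, $k=1$ in Lemma~\ref{simetria} (the paper omits the proof of the general lemma as analogous). Your explicit tracking of the factor $(-1)^{n+k}$ and the remark that the identity degenerates to $O_{-a,n,k}\circ h=(-1)^{n+k}O_{a,n,k}$ when $n+k$ is even is exactly the right way to see why the parity hypothesis is needed.
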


Using Lemma~\ref{simetrian}, we can now provide the corresponding proposition for $z=-1$.

\begin{prop}\label{propz-1}
For $n+k$ odd and $\left\vert n-k\right\vert \neq 1,$ the  fixed
point $z=-1$ satisfies the following statements.
\begin{enumerate}[a)]
\item The point $z=-1$ is attractive if $\left\vert a+\frac{n^{2}-k^{2}-1}{%
\left( n-k\right) ^{2}-1}\right\vert <\frac{2k}{\left( n-k\right) ^{2}-1}$
and it is a superattractor for $a=-\frac{n+k}{n-k}$.
\item If $\left\vert a+\frac{n^{2}-k^{2}-1}{\left( n-k\right) ^{2}-1}%
\right\vert =\frac{2k}{\left( n-k\right) ^{2}-1}$ then $z=-1$ is an
indifferent fixed point.
\item The point $z=-1$ is repulsive for any other value of $a\in\com$.
\end{enumerate}
\end{prop}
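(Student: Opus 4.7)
The plan is to deduce Proposition~\ref{propz-1} as an immediate corollary of Proposition~\ref{propz1} by invoking the conjugacy $h(z)=-z$ between $O_{-a,n,k}$ and $O_{a,n,k}$ established in Lemma~\ref{simetrian}, which holds precisely when $n+k$ is odd. Under this hypothesis, $h$ sends the fixed point $z=1$ of $O_{-a,n,k}$ to the fixed point $z=-1$ of $O_{a,n,k}$, and because holomorphic conjugacy preserves multipliers, the classification of $z=-1$ as attracting, indifferent, or repelling for $O_{a,n,k}$ coincides with that of $z=1$ for $O_{-a,n,k}$.

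Concretely, I would differentiate the identity $O_{a,n,k}=h^{-1}\circ O_{-a,n,k}\circ h$ at $z=-1$. Using $h=h^{-1}$, $h'\equiv -1$ and $h(-1)=1$, the chain rule yields
\[
O_{a,n,k}'(-1)\;=\;(-1)\cdot O_{-a,n,k}'(1)\cdot(-1)\;=\;O_{-a,n,k}'(1),
\]
so $|O_{a,n,k}'(-1)|$ and $|O_{-a,n,k}'(1)|$ coincide. Applying Proposition~\ref{propz1} with the parameter $-a$ in place of $a$, the attraction inequality
\[
\left|(-a)-\tfrac{n^{2}-k^{2}-1}{(n-k)^{2}-1}\right|<\tfrac{2k}{(n-k)^{2}-1}
\]
translates directly to $\bigl|a+\tfrac{n^{2}-k^{2}-1}{(n-k)^{2}-1}\bigr|<\tfrac{2k}{(n-k)^{2}-1}$, the superattractor value $-a=\tfrac{n+k}{n-k}$ becomes $a=-\tfrac{n+k}{n-k}$, and the indifferent and repelling cases transfer in the same manner. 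This gives parts (a), (b), (c).

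Before concluding, I would verify that the hypotheses of Proposition~\ref{propz1} are satisfied: that result requires $|n-k|\neq 0,1$, and here $|n-k|\neq 1$ is assumed, while $|n-k|=0$ is excluded because $n=k$ would force $n+k$ to be even, contradicting the standing assumption. I would also briefly note that Lemma~\ref{simetrian} needs $n+k$ odd in order for the sign $(-1)^{n+k}=-1$ to make $h\circ O_{-a,n,k}= O_{a,n,k}\circ h$ hold at the level of the factors of $O_{a,n,k}$. There is no real obstacle in this proof; the only delicate step is the multiplier-preservation computation, and that is a one-line application of the chain rule, so the argument is essentially algebraic bookkeeping once the symmetry is in hand.
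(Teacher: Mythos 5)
Your proposal is correct and follows exactly the route the paper takes: the paper likewise derives Proposition~\ref{propz-1} from Proposition~\ref{propz1} via the conjugacy $h(z)=-z$ of Lemma~\ref{simetrian}, merely stating this without writing out the details. Your explicit multiplier computation and the observation that $n=k$ is automatically excluded when $n+k$ is odd are correct and fill in the bookkeeping the paper leaves implicit.
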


Propositions \ref{propz1} and \ref{propz-1} describe the sets of parameters for which the hyperbolic regions in the parameter planes corresponding to parameters for which $z=1$ or $z=-1$ are attractive are bounded. The next two propositions describe the  degeneracy cases for which these regions are unbounded.

\begin{prop}\label{prop:nk1}
Let $a=\alpha+i\beta$. Then, for $\left\vert n-k\right\vert =1$ the following statements hold.
\begin{enumerate}[a)]
\item For $k-n=1$, the  fixed point $z=1$ is an attractor if $\alpha
<-n$ and $z=-1$ is an attractor if $\alpha >n$;
\item For $n-k=1$, the   fixed point $z=1$ is an attractor if $\alpha>n$
and $z=-1$ is an attractor if $\alpha <-n$.
\end{enumerate}
\end{prop}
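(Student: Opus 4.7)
The plan is to directly compute the multiplier $O_{a,n,k}'(1)$ in the degenerate cases $|n-k|=1$, where the formula from Proposition \ref{propz1} breaks down because the radius $2k/((n-k)^{2}-1)$ has a vanishing denominator.

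First I would specialize the derivative (\ref{derop_n+k }) at $z=1$, where the factor $(z-a)^{k-1}$ cancels $k-1$ powers of $1-a$ from the denominator, leaving
$$O_{a,n,k}'(1)=\frac{a^{2}(n-k)-2an+(n+k)}{(1-a)^{2}}.$$
The crucial observation is that when $|n-k|=1$ the numerator has $a=1$ as a root, so one further factor of $a-1$ cancels. Concretely, for $n-k=1$ (hence $n+k=2n-1$),
$$a^{2}-2an+2n-1=(a^{2}-1)-2n(a-1)=(a-1)(a+1-2n),$$
which gives $O_{a,n,k}'(1)=(a+1-2n)/(a-1)$. The attractivity condition $|O_{a,n,k}'(1)|<1$ becomes $|a+1-2n|^{2}<|a-1|^{2}$; writing $a=\alpha+i\beta$, the imaginary parts $\beta^{2}$ cancel and one is left with $(\alpha+1-2n)^{2}<(\alpha-1)^{2}$, which factors as $4(1-n)(\alpha-n)<0$. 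Since $n\geq 2$ implies $1-n<0$, this is equivalent to $\alpha>n$, proving the $z=1$ part of (b).

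For $k-n=1$ (hence $n+k=2n+1$), the analogous factorization is $-a^{2}-2an+2n+1=(1-a)(a+1+2n)$, yielding $O_{a,n,k}'(1)=(a+1+2n)/(1-a)$ and the condition $\alpha<-n$, proving the $z=1$ part of (a).

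Finally, the $z=-1$ statements follow at no extra cost: whenever $|n-k|=1$ the sum $n+k$ is odd, so Lemma \ref{simetrian} gives $h^{-1}\circ O_{-a,n,k}\circ h=O_{a,n,k}$ with $h(z)=-z$. The multiplier of $O_{a,n,k}$ at $z=-1$ equals that of $O_{-a,n,k}$ at $z=1$, so the region of attractivity for $z=-1$ is obtained from the one for $z=1$ by replacing $\alpha$ with $-\alpha$. The only real obstacle is spotting the algebraic cancellation that makes the apparent pole at $a=1$ removable precisely when $|n-k|=1$; once this is noted, everything else reduces to a comparison of squared moduli that collapses to a $\beta$-independent linear inequality in $\alpha$.
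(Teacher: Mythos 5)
Your computation is correct and follows essentially the same route the paper intends: evaluate the multiplier $O_{a,n,k}'(1)=\bigl(a^{2}(n-k)-2an+(n+k)\bigr)/(1-a)^{2}$, observe the extra cancellation of $a-1$ when $|n-k|=1$, reduce $|O_{a,n,k}'(1)|<1$ to the linear inequality in $\alpha$, and transfer to $z=-1$ via the symmetry $h(z)=-z$ of Lemma \ref{simetrian}. This matches the paper's indicated proof (``analogous to Proposition \ref{-1b}'' plus the conjugacy for $z=-1$), with the explicit factorizations neatly explaining why the stability regions degenerate to half-planes.
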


\begin{prop}\label{prop:nk0}
For  $n=k$ the  fixed point $z=1$ is an attractor if $
\left\vert a-1\right\vert >2k$, it is an indifferent point if $\left\vert
a-1\right\vert =2k$ and it is a repulsor if $\left\vert a-1\right\vert <2k$.
\end{prop}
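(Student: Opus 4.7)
The plan is to follow the same strategy used in the proof of Proposition~\ref{-1b} and the other fixed-point stability results: evaluate the derivative $O_{a,n,k}'$ at $z=1$ in the special case $n=k$, and then read off the three regions from the size of the multiplier. Since $z=1$ is a fixed point of $O_{a,n,k}$ for every value of the parameters, the whole classification reduces to understanding the locus $\{a\in\mathbb{C}:|O_{a,k,k}'(1)|=1\}$ and determining on which side of this locus the multiplier is smaller than $1$.

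The main computation uses the formula~\eqref{derop_n+k } for $O_{a,n,k}'$. Setting $n=k$, the polynomial $-anz^2+((n+k)+a^2(n-k))z-an$ collapses to $-k(az^2-2z+a)$, which evaluated at $z=1$ gives $-2k(a-1)$. The factor $(z-a)^{k-1}$ contributes $(1-a)^{k-1}$ at $z=1$, the factor $z^{n-1}$ is $1$, and the denominator $(1-az)^{k+1}$ becomes $(1-a)^{k+1}$. After cancellation one obtains
\begin{equation*}
O_{a,k,k}'(1)=\frac{2k}{1-a},
\end{equation*}
so that $|O_{a,k,k}'(1)|=2k/|a-1|$.

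From here the three statements of the proposition follow immediately from the definition of attracting, indifferent and repelling fixed points: $|O_{a,k,k}'(1)|<1$ is equivalent to $|a-1|>2k$, the equality case is $|a-1|=2k$, and the strict reverse inequality gives $|a-1|<2k$. No superattracting value of $a$ appears because $O_{a,k,k}'(1)$ never vanishes for finite $a$, which is consistent with the unbounded hyperbolic component visible in Figure~\ref{pparpar}(a).

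I do not anticipate a real obstacle here: once Lemma-style manipulations reduce $O_{a,n,k}'(1)$ to the simple expression above, the proof is a one-line case analysis. The only mild subtlety is to make sure the cancellation of $(1-a)^{k-1}$ with $(1-a)^{k+1}$ is valid, i.e.\ that $a\neq 1$; but at $a=1$ the operator degenerates (the numerator and denominator in the bracket share a factor), so this value is excluded from the statement in the same way earlier degenerate parameters were excluded in Sections~\ref{sec familia} and~\ref{familiaNueva}.
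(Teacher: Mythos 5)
Your computation is correct: with $n=k$ the quadratic factor in \eqref{derop_n+k } evaluated at $z=1$ gives $-2k(a-1)$, and after cancelling $(1-a)^{k-1}$ against $(1-a)^{k+1}$ one gets $O_{a,k,k}'(1)=2k/(1-a)$, from which the three cases follow at once. This is exactly the multiplier-based argument the paper indicates (analogous to Proposition~\ref{-1b}), so the proposal matches the paper's approach.
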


It follows from Proposition~\ref{prop:nk1} that if $|n-k|=1$ the  sets of parameters for which the strange fixed points $z=1$ and $z=-1$ are attracting are two half planes (see Figure~\ref{pparpar} (b)). Also, it follows from Proposition~\ref{prop:nk0} that if $n=k$ the sets of parameters for which the strange fixed point $z=1$ is attracting corresponds to the complement of a round disk (see Figure~\ref{pparpar} (a)).

After analysing the stability of the points $z=1$ and $z=-1$, we focus on the antennas in the parameter planes. Similarly to what happens for the operator $O_a$, the maps $O_{a,n,k}$ posses antennas. These antennas correspond to real sets of parameters for which the orbits of the critical points cannot converge to $z=0$ and $z=\infty$. We prove this phenomenon in the next two results. First we show that, if $a\in\mathbb{R}$, then $O_{a,n,k}$ leaves the unit circle invariant.

\begin{lem}\label{lem:circlenk}
If $a\in\mathbb{R}$, the operator $O_{a,n,k}$ leaves  the unit circle $\mathbb{S}^1$ invariant.
\end{lem}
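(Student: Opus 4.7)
The plan is to mimic exactly the direct computation used earlier for $O_a$ (the case $n=4$, $k=1$), since the real parameter $a$ and the Blaschke-like factor $(z-a)/(1-az)$ are the only things that matter; the exponents $n$ and $k$ contribute only unit moduli. Concretely, I would pick an arbitrary point $z=e^{i\theta}\in\mathbb{S}^1$ and compute $O_{a,n,k}(e^{i\theta})$ directly.

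The key manipulation is to pull an $e^{i\theta}$ out of the numerator of the inner factor:
\[
\frac{e^{i\theta}-a}{1-ae^{i\theta}}=\frac{e^{i\theta}\bigl(1-ae^{-i\theta}\bigr)}{1-ae^{i\theta}}.
\]
Since $a\in\mathbb{R}$, the quantities $1-ae^{i\theta}$ and $1-ae^{-i\theta}$ are complex conjugates, so they have equal modulus. Hence
\[
\left|\frac{e^{i\theta}-a}{1-ae^{i\theta}}\right|=|e^{i\theta}|\cdot\frac{|1-ae^{-i\theta}|}{|1-ae^{i\theta}|}=1.
\]
Raising this to the $k$-th power and multiplying by $|e^{in\theta}|=1$ gives $|O_{a,n,k}(e^{i\theta})|=1$, which is exactly the claim that $O_{a,n,k}(\mathbb{S}^1)\subseteq\mathbb{S}^1$.

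There is essentially no obstacle: the factor $(z-a)/(1-az)$ is (up to a unimodular constant) the standard Blaschke factor associated with the real parameter $a$, which is known to preserve $\mathbb{S}^1$ whenever $a\in\mathbb{R}$ and $a$ is not a pole on the circle (and when $a=\pm1$ the factor is constant, so the statement is trivial). The powers $n$ and $k$ play no role, so the proof is identical to the $n=4,k=1$ case already established. One minor care-point is to avoid division by zero, i.e.\ $e^{i\theta}=1/a$, but this only occurs when $|a|=1$, in which case the preceding computation still yields a well-defined limit on $\mathbb{S}^1\setminus\{1/a\}$ and the invariance statement is interpreted in the usual sense on $\wcom$.
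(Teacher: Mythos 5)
Your proof is correct and follows exactly the same route as the paper's: writing $e^{i\theta}-a=e^{i\theta}(1-ae^{-i\theta})$ and using that $1-ae^{i\theta}$ and $1-ae^{-i\theta}$ are complex conjugates when $a\in\mathbb{R}$, so that the modulus of $O_{a,n,k}(e^{i\theta})$ is $1$. The paper simply carries out this computation (already done explicitly for the $n=4$, $k=1$ case) without further elaboration, so there is nothing to add.
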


\begin{proof}
\;Let $z=e^{i\theta}\in \mathbb{S}^1$ and $a\in \mathbb{R}$. Using that  $1-a e^{i\theta}$ and $1-a e^{-i\theta}$ are complex conjugate when $a$ is real we have
$
\left|O_{a,n,k}\left( e^{i\theta}\right)\right| =1.
$ Hence, the image of a point of the unit circle is also in the unit circle. \hfill $\square$
\end{proof}

 We can now analyse the antennas. They are described in the next proposition as a real set of parameters (actually, the union of two intervals) for which $c_{\pm}\in\cercle$. In that case, by  Lemma~\ref{lem:circlenk}, we know that the orbits of the free critical points cannot exit the unit circle and, hence, cannot converge to $z=0$ or $z=\infty$. These intervals are obtained by analysing for which real parameters the radical of the quadratic equation that provides $c_{\pm}$ is not positive. In that case, the free critical points are complex conjugate and lie in $\cercle$.

\begin{prop} \label{PropAntenaN+k}
If $n \neq k$ and $a\in \left( -\left|\frac{n+k}{n-k}\right|,-1\right) \cup \left(1,\left|\frac{n+k}{n-k}\right|\right)$, then the critical points $c_{\pm}$ of $O_{a,n,k}$ satisfy $c_{\pm}\in\cercle$. Furthermore, the following statements hold.
\begin{enumerate}[a)]
\item If $a=\pm 1$, the operator $O_{a,n,k}$ decreases its  degree by $k$. Moreover, $O_{1,n,k}=z^n$ and $O_{-1,n,k}=(-1)^kz^n$.
\item  If $n \neq k$ and $a=\frac{n+k}{n-k}$, then $c_{+}=c_{-}=1$ and $z=1$ is a superattractive fixed point.
\item If $n \neq k$ and $a=-\frac{n+k}{n-k}$, then $c_{+}=c_{-}=-1$. Moreover,  if $n+k$ is odd then $z=-1$ is a superattracive fixed point.
\end{enumerate}

\end{prop}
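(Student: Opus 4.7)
The plan is to reduce the claim $c_\pm\in\cercle$ to a sign analysis of the discriminant of the quadratic $-anz^2+((n+k)+(n-k)a^2)z-an$ whose roots are $c_\pm$. First I would observe, by Vieta's formulas on this quadratic, that $c_+c_-=1$. Hence for $a\in\real$ the critical points are either both real with product $1$, or else they form a complex conjugate pair; in the latter case $c_+=\overline{c_-}=1/c_-$ forces $|c_\pm|=1$. Consequently, $c_\pm\in\cercle$ precisely when the two roots are non-real, i.e.\ when the discriminant $\Delta(a):=(a^2-1)((n-k)^2a^2-(n+k)^2)$ is strictly negative.

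Next I would perform the sign analysis of $\Delta$. Since $n\geq 2$, $k\geq 1$ and $n\neq k$, we have $n+k>|n-k|>0$, so $\left|\tfrac{n+k}{n-k}\right|>1$. The first factor of $\Delta(a)$ is negative on $|a|<1$ and positive on $|a|>1$; the second factor is negative on $|a|<\left|\tfrac{n+k}{n-k}\right|$ and positive outside. The product is negative exactly when one factor is negative and the other positive, which, given the above inequality, occurs only for $1<|a|<\left|\tfrac{n+k}{n-k}\right|$. This yields precisely $a\in\left(-\left|\tfrac{n+k}{n-k}\right|,-1\right)\cup\left(1,\left|\tfrac{n+k}{n-k}\right|\right)$, as claimed.

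For part (a), substituting $a=\pm 1$ into \eqref{oper_n+k} makes the rational factor $\frac{z-a}{1-az}$ collapse to a constant $\pm 1$, so the operator reduces to a monomial of degree $n$ --- a drop of exactly $k$ from the generic degree $n+k$. For parts (b) and (c), plugging $a=\pm\tfrac{n+k}{n-k}$ into the formula for $c_{\pm}$ shows that the discriminant vanishes and a short computation gives $c_+=c_-=\pm 1$. To conclude that $z=1$ (respectively $z=-1$, when $n+k$ is odd) is then a superattractive fixed point, I would evaluate the numerator of the derivative formula for $O_{a,n,k}'$, namely $-anz^2+((n+k)+(n-k)a^2)z-an$, at $z=1$; for $a=\tfrac{n+k}{n-k}$ the resulting expression $(n-k)a^2-2na+(n+k)$ vanishes, giving $O'_{a,n,k}(1)=0$. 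The case $a=-\tfrac{n+k}{n-k}$ with $n+k$ odd then follows immediately from the conjugacy $h(z)=-z$ of Lemma~\ref{simetrian}, which swaps $z=1$ and $z=-1$ and sends $a$ to $-a$.

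The computations involved are elementary; the only point requiring mild care is the strict inequality $\left|\tfrac{n+k}{n-k}\right|>1$, which prevents a spurious second interval from appearing in the sign analysis of $\Delta$ (namely, the interval one would get if the two factors' roots were interleaved in the opposite order) and guarantees that the two advertised intervals are precisely those on which $c_\pm\in\cercle$.
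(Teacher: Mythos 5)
Your argument is correct and follows essentially the same route the paper sketches: combine the Vieta identity $c_+c_-=1$ with a sign analysis of the radicand $(a^2-1)\left((n-k)^2a^2-(n+k)^2\right)$ to conclude that the free critical points are a complex-conjugate pair on the unit circle exactly on the two stated intervals, and then obtain parts (a)--(c) by direct substitution (plus the symmetry $h(z)=-z$ for $z=-1$). One minor aside: a careful evaluation in part (a) gives $O_{1,n,k}(z)=(-1)^kz^n$ and $O_{-1,n,k}(z)=z^n$, so the two cases in the statement appear to be swapped, but this does not affect your argument.
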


Let us notice that, for $n>k+1$ both antennas are inside the collar-like sets that appear in the parameter planes (see  Figure~\ref{pparpar} (c) and (d), see also Figure~\ref{planonuevo44}).

To finish this section we  demonstrate the non-existence of Herman rings for the operator $O_{a,n,k}$. Herman rings are doubly connected sets of points where the map is conjugate to a rigid rotation (see \cite{Milnor}). Herman rings are not related to fixed points, so their existence is more difficult to determine. However, if there would be a Herman ring for an operator coming from a numerical method, it would provide a positive measure set of initial conditions for which the method fails which would not be related to strange attractors.

\begin{thm}
 The operator $O_{a,n,k}$ has no Herman rings for $a\in\com$, $n\geq 2$, and $k\geq1$.
\end{thm}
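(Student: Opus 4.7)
The plan is to combine the involutive symmetry $I(z)=1/z$ satisfied by $O_{a,n,k}$ (Lemma~\ref{conjugacionN}) with the classical theorem that each boundary component of a Herman ring of a rational map must be contained in the $\omega$-limit set of some critical point. The first step is to pin down which critical orbits are actually available: the critical points $z=0$ and $z=\infty$ are the superattracting fixed points themselves, while $z=a$ and $z=1/a$ satisfy $O_{a,n,k}(a)=0$ and $O_{a,n,k}(1/a)=\infty$, so their orbits are pre-trapped in the basins of $0$ and $\infty$ and cannot accumulate on the boundary of any rotation domain. Hence the only candidates are the free critical points $c_{+}$ and $c_{-}$, and from $c_{-}=1/c_{+}$ together with Lemma~\ref{conjugacionN} one obtains $O_{a,n,k}^{m}(c_{-})=1/O_{a,n,k}^{m}(c_{+})$ for every $m\geq 0$, so that their $\omega$-limit sets are linked by $\omega(c_{-})=I(\omega(c_{+}))$.

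Assuming for contradiction that a Herman ring cycle $\mathcal{H}=\{H_{0},\ldots,H_{p-1}\}$ exists, I would observe that $I(\mathcal{H})$ is again a Herman ring cycle and that, since $I$ is an involution, either $\mathcal{H}=I(\mathcal{H})$ as sets or the two cycles are disjoint. Counting boundary components against the only available accumulation sets $\omega(c_{\pm})$ and using the relation $\omega(c_{-})=I(\omega(c_{+}))$, the disjoint case runs out of critical orbits, and the remaining case forces $p=1$ together with $I(H)=H$ for the unique Herman ring $H=H_{0}$.

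The last step is to analyse the restriction of $I$ to $H$ via a uniformising conformal map $\phi\colon H\to A_{R}:=\{1<|w|<R\}$ conjugating $O_{a,n,k}|_{H}$ to an irrational rotation $w\mapsto e^{2\pi i\alpha}w$. The induced holomorphic involution $\widetilde I:=\phi\circ I\circ\phi^{-1}$ lies in $\mathrm{Aut}(A_{R})$, which consists of rotations $w\mapsto e^{i\theta}w$ and inversions $w\mapsto e^{i\theta}R/w$. Commutation with the irrational rotation rules out the inversion case (it would force $2\alpha\in\mathbb{Z}$), so $\widetilde I$ is a rotation, and as a non-trivial involution it must be the half-turn $w\mapsto -w$. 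But the half-turn preserves each boundary circle of $A_{R}$ setwise, so $I$ fixes each boundary component of $H$ setwise; combined with $\omega(c_{-})=I(\omega(c_{+}))$, the two critical $\omega$-limit sets meet exactly the same boundary components of $H$, and one checks that at least one boundary component is then left outside $\omega(c_{+})\cup\omega(c_{-})$, contradicting the theorem invoked at the outset.

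The hard part will be making the boundary-accumulation count of the middle step precise and uniform in the parameters: one must show that a single critical $\omega$-limit set cannot simultaneously carry two distinct $O_{a,n,k}$-cycles of Herman ring boundary curves, and that when $I$ preserves each boundary setwise the orbits of $c_{\pm}$ cannot cover both boundaries at once. Both facts rely on the disjointness and topological position of the boundary continua inside the Julia set. Once these counting obstructions are handled, the annular computation of the final step is elementary and yields the conclusion for all $a\in\mathbb{C}$, $n\geq 2$ and $k\geq 1$.
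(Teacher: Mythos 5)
Your strategy---exploiting the involution $I(z)=1/z$ directly on $O_{a,n,k}$---is genuinely different from the paper's, but as written it has a real gap, and it is exactly the one you flag at the end as ``the hard part.'' The classical fact available to you is that each boundary component of a Herman ring is contained in the accumulation set of the critical orbits; it is \emph{not} classical, and not elementary, that the two boundary components of one ring (or the boundary curves of two distinct ring cycles) require two \emph{distinct} critical orbits. That refined statement is essentially the content of the results of Shishikura and Yang that the paper cites, and your argument needs it at both decisive moments: in the ``disjoint cycles'' case, where a priori both $\omega(c_+)$ and $\omega(c_-)$ could contain boundary curves of both cycles, so nothing ``runs out''; and in the symmetric case, where your (correct) annulus computation shows that $I$ restricted to $H$ is the half-turn and preserves each boundary circle setwise, so that $\omega(c_+)$ and $\omega(c_-)$ meet exactly the same boundary components---which only produces a contradiction if you already know that a single critical orbit cannot accumulate on both boundary components of the ring. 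Deferring this to ``the disjointness and topological position of the boundary continua'' is not a proof; it is the theorem. There is also a smaller error in the case analysis: $\mathcal{H}=I(\mathcal{H})$ does not force $p=1$ with $I(H_0)=H_0$. Since $I$ commutes with $O_{a,n,k}$, it acts on the cycle as a shift by some $j$ with $2j$ divisible by $p$, so either $I$ fixes every ring setwise (with $p$ arbitrary, which your annulus argument does handle) or $p$ is even and $I$ shifts by $p/2$; in the latter case $I$ may interchange the two invariant boundary cycles, $\omega(c_+)$ and $\omega(c_-)$ can then cover one boundary cycle each, and your final contradiction evaporates.

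For comparison, the paper avoids all of these counting issues by quotienting out the symmetry rather than arguing around it: it conjugates $O_{a,n,k}$ by $h(z)=(z+1)/(z-1)$ to an odd map $R_{a,n,k}$, then semiconjugates by $z\mapsto z^2$ to a map $S_{a,n,k}$ whose only free critical point is $\tilde c_+^{\,2}=\tilde c_-^{\,2}$; a rational map with a single free critical orbit has no Herman rings by the cited results, and the semiconjugacy transports Herman rings of $O_{a,n,k}$ to Herman rings of $S_{a,n,k}$. If you want to salvage your approach, the cleanest repair is to perform this quotient yourself (or to cite a version of Yang's theorem strong enough to give the per-boundary-component count of critical orbits), at which point the involution analysis on the annulus becomes unnecessary.
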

\proof
\;The idea of the proof is to semiconjugate the operator $O_{a,n,k}$  with a rational map $S_{a,n,k}$ of the same degree with a single free critical orbit which cannot have Herman rings.

The first step is to conjugate $O_{a,n,k}$ with a rational map $R_{a,n,k}$ using the Möbius transformation $h(z)=(z+1)/(z-1)$. This Möbius transformation sends $\infty$ to $1$, $0$ to $-1$, and $1$ to $\infty$. Moreover, $h^{-1}(z)=h(z)$. The map $R_{a,n,k}=h^{-1}\circ O_{a,n,k}\circ h$ is given by
\begin{equation}\label{Rank}
R_{a,n,k}(z)=\frac{(z+1)^n\left(z(1-a)+1+a\right)^k+(z-1)^n\left(z(1-a)-1-a\right)^k}{(z+1)^n\left(z(1-a)+1+a\right)^k-(z-1)^n\left(z(1-a)-1-a\right)^k}.
\end{equation}

Given that $h$ sends $\infty$ to 1 and $0$ to $-1$, it follows that $1$ and $-1$ are superattractive fixed points of local degree $n$. A simple computation shows that $R_{a,n,k}(z)=-R_{a,n,k}(-z)$.
We conclude that $R(z)$ is an odd function. Therefore, $\left(R_{a,n,k}(z)\right)^2$ is an even function and there exists a map $S_{a,n,k}$ of the same degree than $R_{a,n,k}$ such that $S_{a,n,k}(z^2)=\left(R_{a,n,k}(z)\right)^2$. In order to find the critical points of $S_{a,n,k}$ we need to find the zeros of $S'_{a,n,k}$. Derivating the previous equation we obtain
$$S'_{a,n,k}(z^2)\cdot 2z=2 R_{a,n,k}(z)\cdot R'_{a,n,k}(z).$$
Hence, the critical points of $S_{a,n,k}$ are given by points $c^2$ where $c$ is a critical point of $R_{a,n,k}$ and points $z_0^2$, where $z_0$ is a zero or a pole of $R_{a,n,k}$.

The critical points which come from zeros and poles of $R_{a,n,k}(z)$ are not free. Indeed,  $\infty$ is a fixed point of $R_{a,n,k}(z)$.
On the other hand,  $h$ sends the point $-1$ to $0$. Since $-1$ is either a preimage of $\infty$ or a fixed point under $O_{a,n,k}$, it follows that $0$ is either a preimage of $\infty$ or a fixed point.

We need to analyse the critical points of the form $c^2$ where $c$ is a critical point of $R_{a,n,k}$. The free critical points of $R_{a,n,k}$, denoted by $\tilde{c}_\pm$, are the image under $h$ of the free critical points $c_{\pm}$ of $O_{a,n,k}$. Using that $c_{\pm}$ satisfy $c_+\cdot c_-=1$ we can conclude that $\tilde{c}_\pm=(c_\pm +1)/(c_\pm -1)$ satisfy $\tilde{c}_+=-\tilde{c}_-$:
$$\frac{c_+ +1}{c_+ -1}=-\frac{c_- +1}{c_- -1}\Leftrightarrow c_+\cdot c_--1=-c_+\cdot c_-+1\Leftrightarrow c_+\cdot c_-=1.$$

We conclude that $S_{a,n,k}$ has a single free critical point given by $\tilde{c}_+^2=\tilde{c}_-^2$. Since a Herman ring requires, at least, two different critical orbits that accumulate on its boundary (see \cite{shi} and \cite{Yang}), it follows that $S_{a,n,k}$ cannot have Herman rings, and neither can $O_{a,n,k}$. \hfill $\square$

\section{Conclusions}\label{sec:conclusions}

The operator $O_b$ \eqref{operador}, as stated in \cite{aa}, presents two unwanted features that can difficult the study of its dynamics. The first one is the duplicity of the information in the parameter plane: we can find two different parameters $b_{1}$ and $b_{2}$ such that $O_{b_{1}}=O_{b_{2}}.$
The other unwanted feature is the unboundedness of
`bad' parameters: there is an `antenna' of parameters
that spreads through the negative real axis for which the corresponding critical orbits do
not converge to the solutions of the equations. This property hinders the study of problematic dynamics.

We avoid these two facts by re-defining the parameter, obtaining the  operator $O_a$ \eqref{operador2}. This operator belongs to a more general family of operators $O_{a,n,k}$ \eqref{oper_n+k} given by
\[
O_{a,n,k}\left( z\right) =z^{n}\left( \frac{z-a}{1-az}\right) ^{k},\;\;\;a\in \mathbb{C}.
\]%
This family also includes operators coming from other root finding algorithms applied to quadratic polynomials. In the case where $O_{a,n,k}$ actually comes from a root finding algorithm, the number $n$ corresponds to the order of convergence to the roots. Hence, $n$ is to be considered at least 2.

\begin{figure}[hbt!]
\centering
\begin{minipage}[b]{0.45\linewidth}{
    \begin{tikzpicture}
    \begin{axis}[width=150pt, axis equal image, scale only axis,  enlargelimits=false, axis on top, 
    ]
      \addplot graphics[xmin=-1.1,xmax=1.1,ymin=-1.1,ymax=1.1] {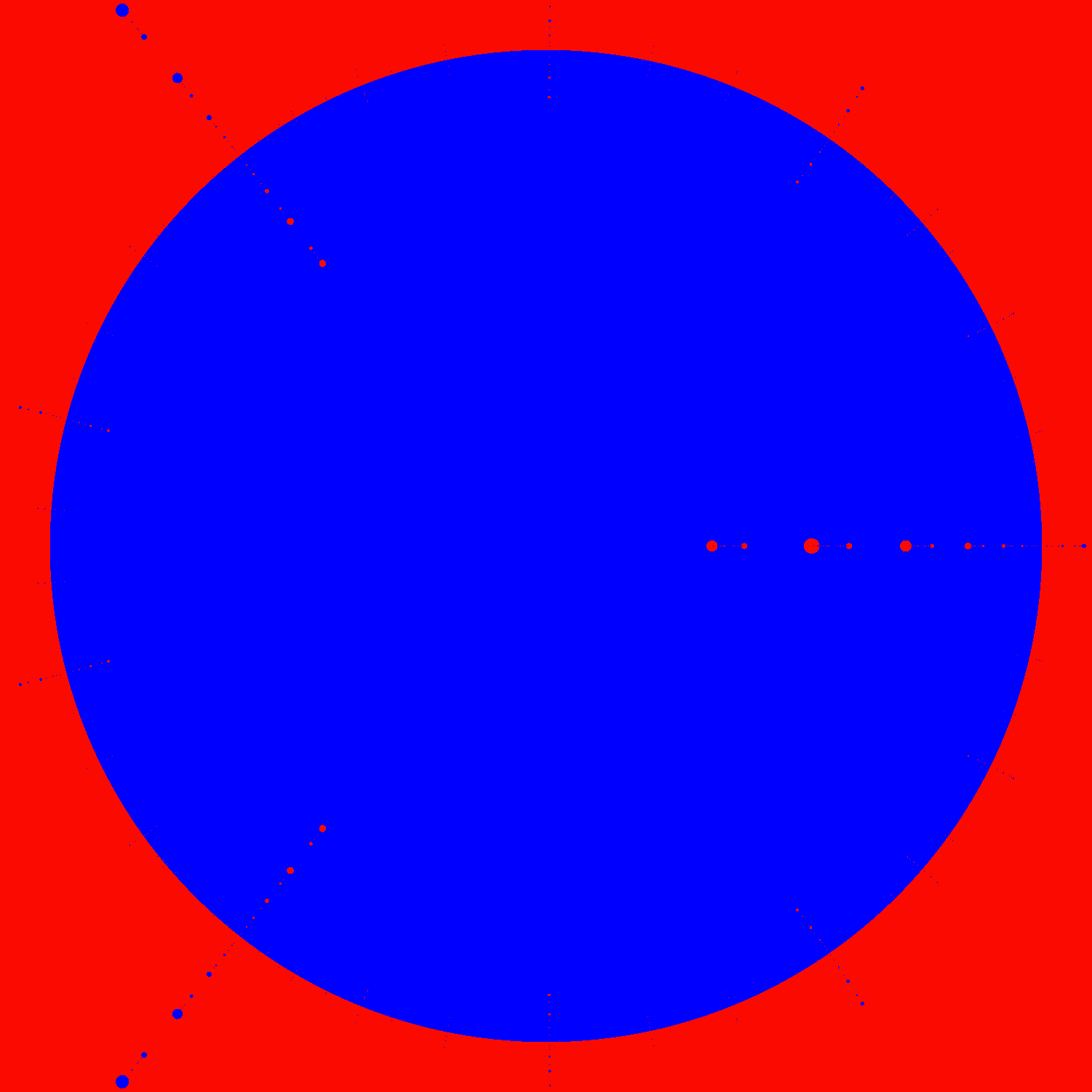};
    \end{axis}
  \end{tikzpicture}}
   \centerline{(a) \small{$a=3,n=4,k=1$}}
  \end{minipage}
      \quad
  \vspace{0.5cm}
\begin{minipage}[b]{0.45\linewidth}{
    \begin{tikzpicture}
    \begin{axis}[width=150pt, axis equal image, scale only axis,  enlargelimits=false, axis on top, 
     ]
      \addplot graphics[xmin=-1.2,xmax=2,ymin=-1.6,ymax=1.6] {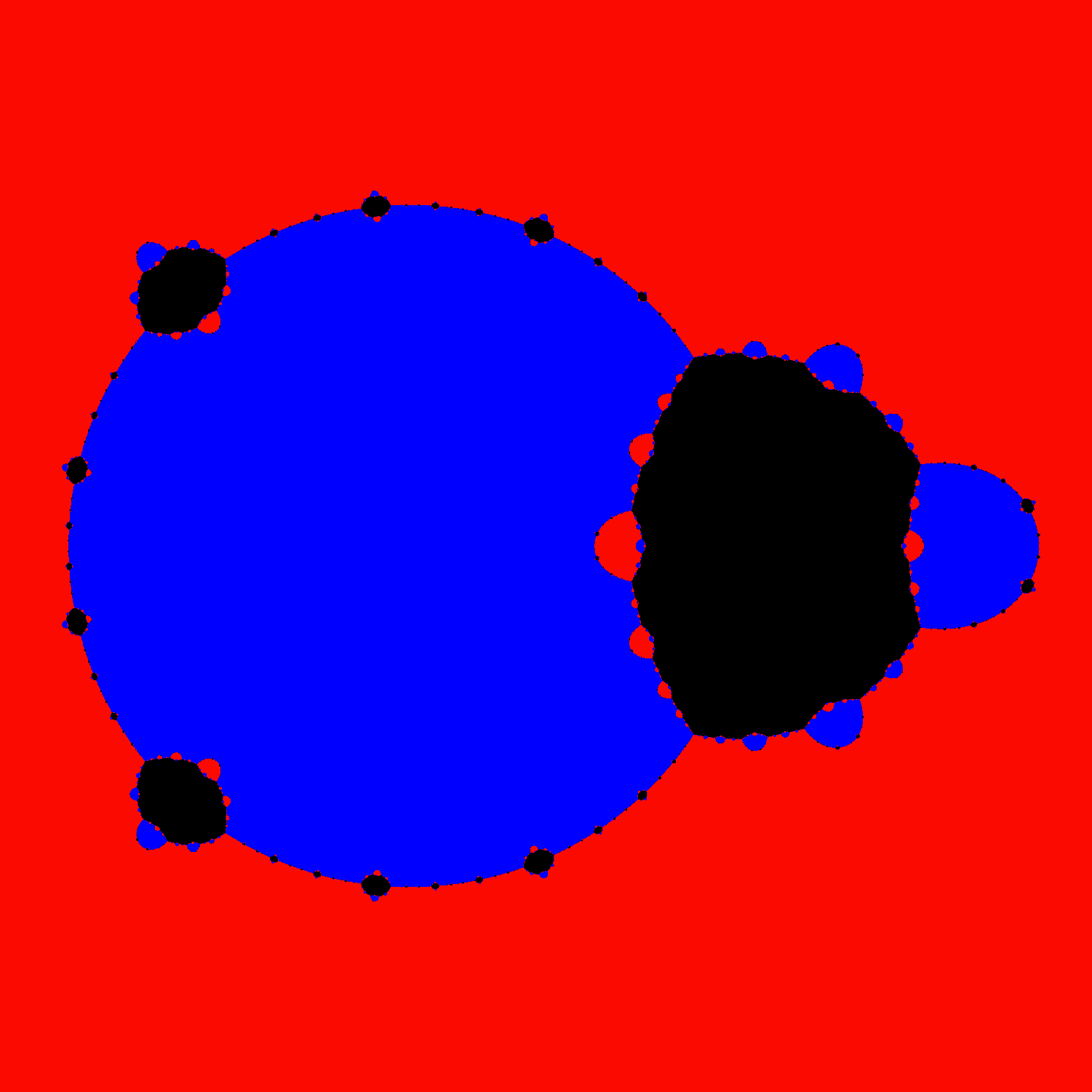};
    \end{axis}
  \end{tikzpicture}}
   \centerline{(b) \small{$a=5/3, n=4, k=1$}}
  \end{minipage}
\begin{minipage}[b]{0.45\linewidth}{
    	\begin{tikzpicture}
    		\begin{axis}[width=150pt, axis equal image, scale only axis,  enlargelimits=false, axis on top, 
    		]
      			\addplot graphics[xmin=-2,xmax=10,ymin=-6,ymax=6] {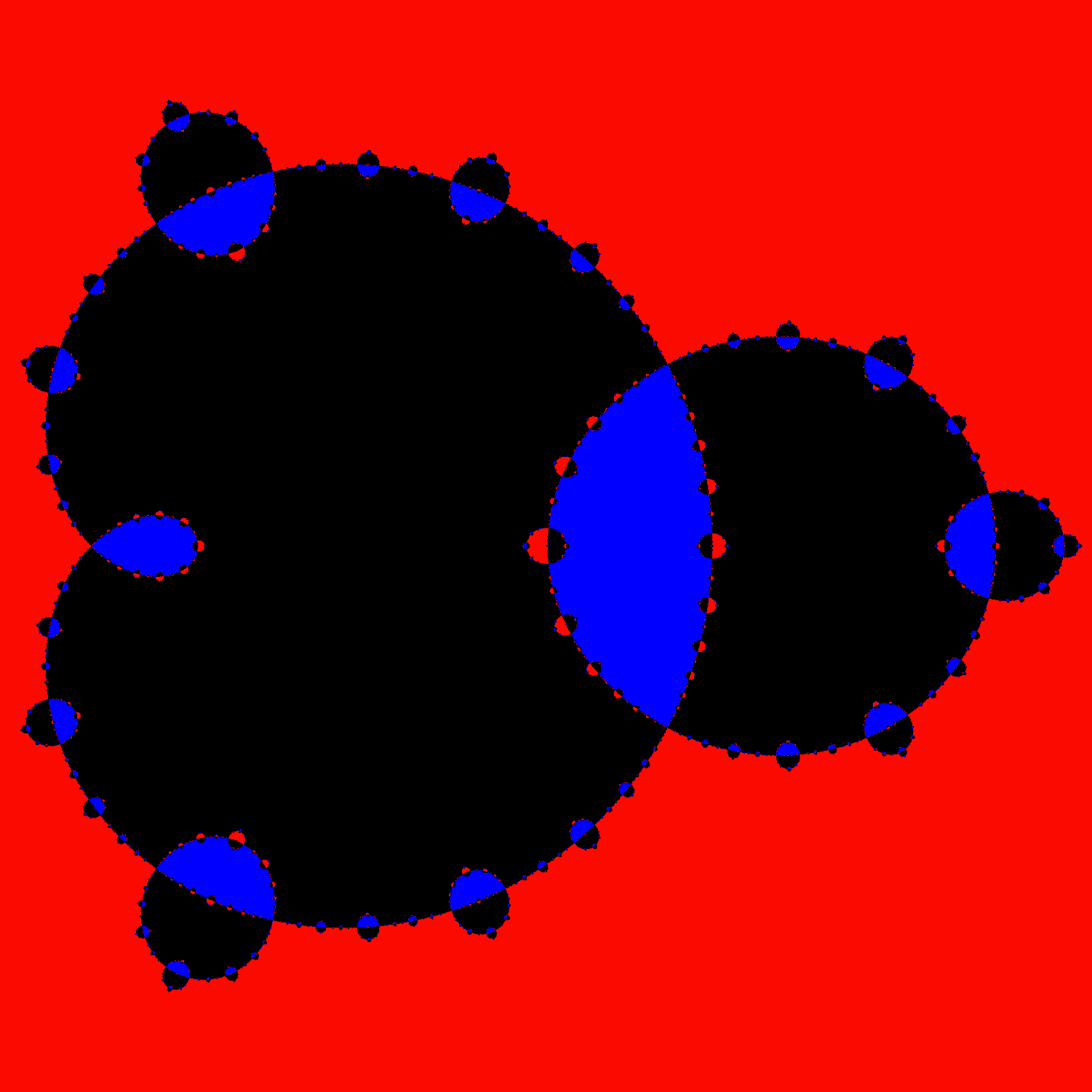};
    		\end{axis}
  		\end{tikzpicture}}
   \centerline{(c) \small{$a=5, n=3,k=2$}}
  \end{minipage}
      \quad
  \vspace{0.5cm}
\begin{minipage}[b]{0.45\linewidth}{
    \begin{tikzpicture}
    \begin{axis}[width=150pt, axis equal image, scale only axis,  enlargelimits=false, axis on top, 
    ]
      \addplot graphics[xmin=-0.3,xmax=0.3,ymin=-0.3,ymax=0.3] {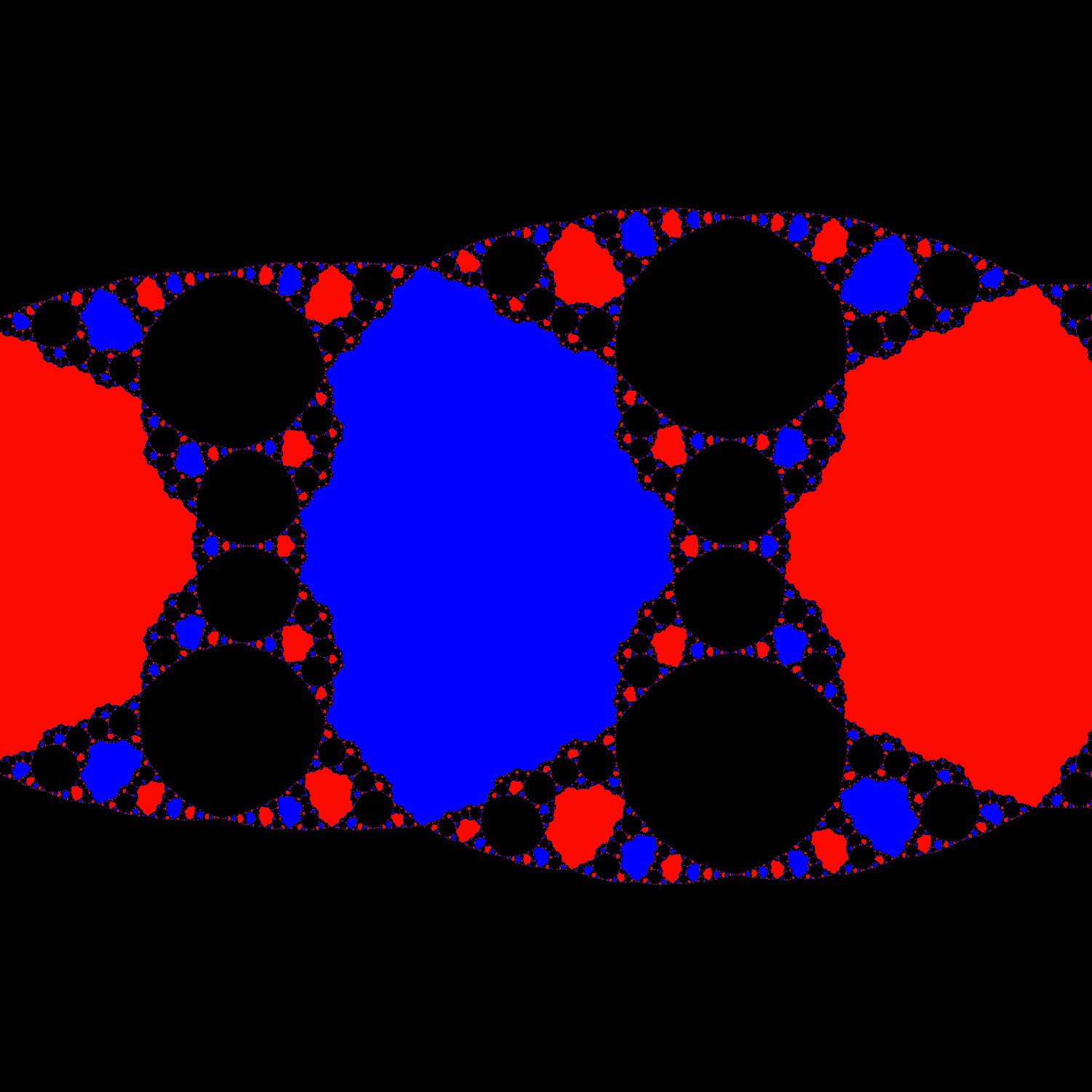};
    \end{axis}
  \end{tikzpicture}}
   \centerline{(d) \small{$a=5, n=3, k=3$} }
    \end{minipage}
\begin{minipage}[b]{0.45\linewidth}{
    	\begin{tikzpicture}
    		\begin{axis}[width=150pt, axis equal image, scale only axis,  enlargelimits=false, axis on top,
    		]
      			\addplot graphics[xmin=-2,xmax=2,ymin=-2,ymax=2]{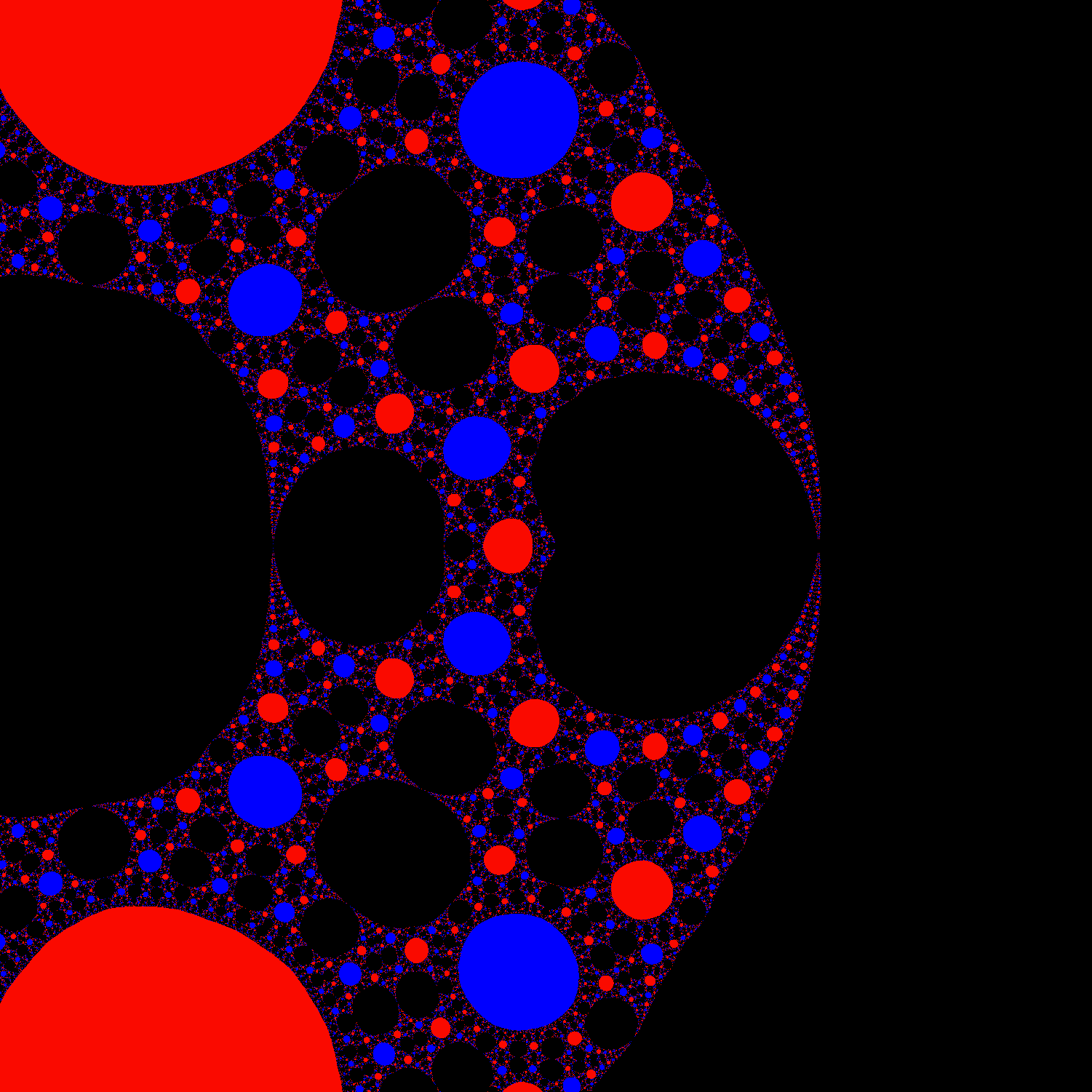};
    		\end{axis}
  		\end{tikzpicture}}
   \centerline{(e) \small{$a=-10, n=3,k=5$}}	
  \end{minipage}
  \quad
\begin{minipage}[b]{0.45\linewidth}{
    \begin{tikzpicture}
    \begin{axis}[width=150pt, axis equal image, scale only axis,  enlargelimits=false, axis on top,
     ]
      \addplot graphics[xmin=-0.01,xmax=0.01,ymin=-0.01,ymax=0.01] {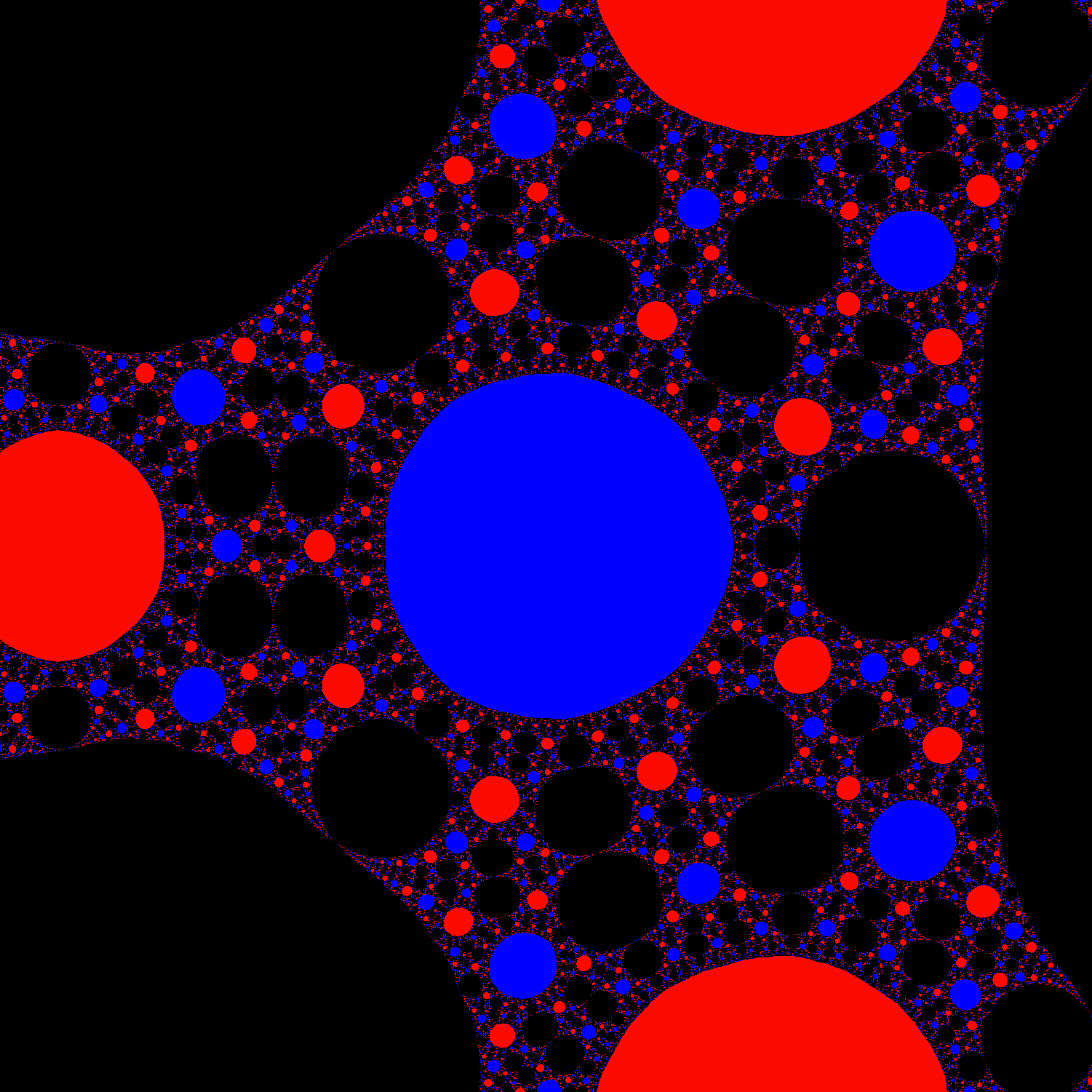};
    \end{axis}
  \end{tikzpicture}}
   \centerline{(f) \small{Zoom in on figure (e)} }
    \end{minipage}
\caption{{\protect\small Dynamical planes of $O_{a,n,k}(z)$ for different values of $n,$ $k$ and $a$. Blue points converge under iteration to $z=0$, red points converge to $z=\infty$ and black points converge to strange attractors.}}
\label{dynam}
\end{figure}

If $n>k+1$, the region of parameters for which there may be strange attractors seems to be bounded (see Figure~\ref{planonuevo44} and Figure~\ref{pparpar} (c) and (d)). In Figure~\ref{dynam} (a) we can observe the dynamical plane of  $O_{a,4,1}$ for a parameter $a$  in the `unbounded' hyperbolic region. We observe that the basin of attraction of $z=0$ (in blue) has some holes in red corresponding to points which converge to $z=\infty$. These holes come from the pole $z=1/a$. We want to point out that this is not very bad news since these holes seem to decrease its diameter  very fast when $a\rightarrow\infty$ if $n>k+1$. In Figure~\ref{dynam} (b) we also show the dynamical plane of  $O_{5/3,4,1}$. For this operator $z=1$ is superattractive. Despite that, the  basins of attraction  of $z=0$ and $z=\infty$ remain big.

On the other hand, if $n\leq k+1$ unwanted dynamical features appear. For instance, if $|n-k|\leq 1$ there are unbounded regions in the parameter plane corresponding to parameters for which the operator has strange attractors. Also, if $n<k-1$ quite complicated structures appear in the parameter planes (see Figure~\ref{pparpar} (e) and (f)). Furthermore, if $n\leq k+1$ the size of the immediate basin of attraction of $z=0$ (the connected component of the basin of attraction that contains $z=0$) can become quite small. Numerical experiments show that the bigger is $k$ compared to $n$ the smaller is this immediate basin (see Figure~\ref{dynam} (c), (d), (e), and (f)). For instance, for $n=3$, $k=5$ and $a=-10$ the diameter of his component is smaller than $10^{-2}$. From the dynamical point of view, this would be a terrible feature if the operator actually came from a numerical method. We would like to remark that the red component which appears very close to $z=0$ in Figure~\ref{dynam} (e) comes from the the pole $1/a=-1/10$. Notice that this red component of points which converge to $z=\infty$ is much bigger than the immediate basin of attraction of $z=0$.

We can conclude that if the operator is obtained from a numerical method, then $n$ and $k$ should satisfy $n>k+1$. Also, since $k\geq1$, $n$ should be at least 3. For such parameters, even if there may be strange attractors, the dynamics of the operator would be suitable from the point of view of numerical methods.

\bibliography{x}

\end{document}